\newtheorem{theorem}{Theorem}
\newtheorem{axiom}{Axiom}
\newtheorem{corollary}[theorem]{Corollary}
\newtheorem{definition}[axiom]{Definition}
\newtheorem{lemma}[theorem]{Lemma}
\newenvironment{remark}{\rem\rm}{\endrem}
\newcounter{unnumber}
\newenvironment{proof}{\prf\rm}{\hfill{$\blacksquare$}\endprf}
\newcommand{\R}{\mathbb{R}}%
\newcommand{\N}{\mathbb{N}}%
\newcommand{\ol}{\overline}%
\newcommand{\ul}{\underline}%
\renewcommand{\>}{\right\rangle}
\newcommand{\<}{\left\langle}
\newcommand{\To}{\longrightarrow}
\DeclareMathOperator*\dom{dom}%
\DeclareMathOperator*\gr{Gr}%
\DeclareMathOperator*\id{Id}%
\DeclareMathOperator*\prox{prox}%
\DeclareMathOperator*\argmin{argmin}
\DeclareMathOperator*\zer{zer}
\DeclareMathOperator*\fix{Fix}
\DeclareMathOperator*\crit{crit}
\DeclareMathOperator*\dist{dist}
\DeclareMathOperator*\proj{proj}
\DeclareMathOperator*\oR{\overline{\R}}%
\title{Continuous dynamics related to monotone inclusions and non-smooth optimization problems}
\author{
Ern\"{o} Robert Csetnek \thanks {University of Vienna, Faculty of Mathematics, Oskar-Morgenstern-Platz 1, A-1090 Vienna, Austria,
email: ernoe.robert.csetnek@univie.ac.at. Research supported by FWF (Austrian Science Fund), project P 29809-N32.}}
\begin{document}
\maketitle

\noindent \textbf{Abstract.} The aim of this survey is to present the main important techniques and tools from
variational analysis used for first and second order dynamical systems of implicit type for solving monotone
inclusions and non-smooth optimization problems. The differential equations are expressed by means
of the resolvent (in case of a maximally monotone set valued operator) or the proximal operator
for non-smooth functions. The asymptotic analysis of the trajectories generated relies on Lyapunov theory,
where the appropriate energy functional plays a decisive role. While the most part of the paper is related
to monotone inclusions and convex optimization problems in the variational case, we present
also results for dynamical systems for solving non-convex optimization problems, where the Kurdyka-\L{}ojasiewicz property is used.

\vspace{1ex}

\noindent \textbf{Keywords.} dynamical systems, Lyapunov analysis, Krasnosel'ski\u{\i}--Mann algorithm, monotone inclusions, resolvent, proximal operator, forward-backward algorithm, non-smooth optimization problem, Kurdyka-\L{}ojasiewicz property \vspace{1ex}

\noindent \textbf{AMS subject classification.} 34G25, 47J25, 47H05, 90C25

\section{Introduction and preliminaries}\label{sec-intr}

Dynamical systems approaching monotone inclusions and optimization problems enjoy much attention since the seventies of the last century (Br\'{e}zis, Baillon and Bruck, see \cite{brezis, baillon-brezis1976, bruck}), not only due to their intrinsic importance in areas like differential equations and applied functional analysis, but also because they have been recognized as a valuable tool for discovering and studying numerical algorithms for optimization problems obtained by time discretization of the continuous dynamics. The dynamic approach to iterative methods in optimization can furnish deep insights into the expected behavior of the method and the techniques used in the continuous case can be adapted to obtain results for the discrete algorithm. For more on the relations between the continuous and discrete dynamics we refer the reader to \cite{peyp-sorin2010}.

Let us mention that the discretization $\dot x(t)\approx \frac{1}{\gamma}(x_{n+1}-x_n$) (with $\gamma >0$) of the gradient flow $$\dot x(t)=-\nabla g(x(t)),$$ where
$g:{\cal H}\to \R$ is a smooth function, leads to the well known steepest descent method (gradient method)
$$x_{n+1}=x_n-\gamma\nabla g(x_n),$$ used for solving the minimization problem
$$\min_{x\in {\cal H}} g(x).$$
In the case of convex non-smooth optimization problems, the discretization of the differential inclusion
$$\dot x(t)\in-\partial f(x(t))$$ leads to the subgradient method $x_{n+1}\in x_n-\gamma\partial f(x_n)$, or, when we use the discretization
$x_{n+1}-x_n\in\gamma\partial f(x_{n+1})$, we obtain the proximal point algorithm \cite{rock-prox}
$$x_{n+1}=(\id+\gamma\partial f)^{-1}(x_n).$$
Let us proceed and consider continuous {\it implicit-type dynamical
systems} associated with monotone inclusions/optimization problems, which are ordinary differential equations formulated via resolvents of maximally monotone operators.
In \cite{bolte-2003}, Bolte
studied the convergence of the trajectories of the following dynamical system
\begin{equation}\label{intr-syst-bolte}
\dot x(t)+x(t)=\proj\nolimits_C\big(x(t)-\gamma\nabla g(x(t))\big)
\end{equation}
where $C$ is a nonempty, closed and convex subset of
${\cal H}$, $x_0\in {\cal H}$, and $\proj_C$ denotes the projection operator on the set $C$. It has been shown
in the convex setting that the
trajectory of \eqref{intr-syst-bolte} converges weakly to a minimizer of the optimization problem
\begin{equation}\label{intr-opt-bolte}
\inf_{x\in C}g(x),
\end{equation}
provided the latter is solvable.  We refer also to the work of Antipin \cite{antipin} for further statements and results concerning
\eqref{intr-syst-bolte}.

The following generalization of the dynamical system \eqref{intr-syst-bolte} has been considered by Abbas and Attouch in \cite[Section 5.2]{abbas-att-arx14}:
\begin{equation}\label{intr-syst-abb-att}
\dot x(t)+x(t)=\prox\nolimits_{\gamma f}\big(x(t)-\gamma B(x(t))\big),\end{equation}
where $B$ is a cocoercive operator. According to \cite{abbas-att-arx14}, in case $\zer(\partial f+B)\neq\emptyset$, the weak asymptotical convergence of the orbit $x$ of
\eqref{intr-syst-abb-att} to an element in $\zer(\partial f+B)\neq\emptyset$ is
ensured by choosing the step-size $\gamma$ in a suitable domain bounded by the parameter of cocoercivity of the operator $B$.

Let us also mention that dynamical systems of implicit type have been considered in the literature also by Attouch and Svaiter in
\cite{att-sv2011}, Attouch, Abbas and Svaiter in \cite{abbas-att-sv} and Attouch, Alvarez and Svaiter in \cite{att-alv-sv}.

For the minimization of the smooth and convex function $g:{\cal H}\rightarrow\R$ over the nonempty, convex and closed set
$C \subseteq {\cal H}$, a continuous in time second order gradient-projection approach
has been considered in \cite{att-alv, antipin}, having as starting point the dynamical system
\begin{equation}\label{intr-gr-pr}
\ddot x(t) + \gamma\dot x(t) + x(t) = \proj\nolimits_C(x(t)-\eta\nabla g (x(t))),
\end{equation}
with constant damping parameter $\gamma > 0$ and constant step size $\eta > 0$. The system \eqref{intr-gr-pr} becomes in case $C={\cal H}$
the "heavy ball method'', sometimes called also ''heavy ball method with friction''. This nonlinear oscillator with damping
is in case ${\cal H}=\R^2$ a simplified version of the differential
system describing the motion of a heavy ball that rolls over the graph of $g$ and that
keep rolling under its own inertia until friction stop it at a critical point of $g$ (see \cite{att-g-r}).
Later, (see \cite{su-boyd-candes, att-c-p-r-math-pr2018} and the references therein) it was highlighted the fact that a vanishing in time damping is more suitable in the context of second order dynamics, since
this will induce fast convergence results in comparison to first order dynamics.

Second order dynamics provide through discretization numerical schemes with inertial terms, where the new iterate 
is computed based on the previous two ones. Taking into account the history of the iterates makes the asymptotic 
analysis more involved and may provide better convergence rates. It is an interesting topic to take into consideration more than two iterates in order to compute the next one. In case of algorithms we refer the reader to
the recent contribution \cite{comb-glod} and to \cite{att-ch-r-third-ord-optimization} for a third order gradient evolution system. In this survey we are considering first and second order dynamics. 

Finally, motivated by concrete applications where non-convex objects appear, let us say a few words about this setting. In the context of minimizing a non-convex smooth function,
several first- and second-order gradient type dynamical systems have been investigated by
\L{}ojasiewicz \cite{lojasiewicz1963}, Simon \cite{simon}, Haraux and Jendoubi \cite{h-j},
Alvarez, Attouch, Bolte and  Redont \cite[Section 4]{alv-att-bolte-red}, Bolte, Daniilidis and Lewis \cite[Section 4]{b-d-l2006}, etc.
In the aforementioned papers, the convergence of the trajectories  is obtained in the framework of functions satisfying the Kurdyka-\L{}ojasiewicz property, see Section 4 for details.

{\bf 1.1 Contents of the paper.} In this manuscript we revisit the most important first and second-order dynamical
systems of implicit type studied in the context of solving monotone inclusions and non-smooth optimization
problems (convex and non-convex). The main focus is on the techniques which are used in the investigation
of the asymptotic behaviour of the trajectories generated. Also the connection to
discrete schemes will be also underlined in order to see the algorithmic impact of the dynamical systems considered.
Moreover, the proofs are not in every stage of this survey complete, but we will always refer to
appropriate references when something is missing. The existence and uniqueness of the
trajectory usually follows from the Cauchy-Lipschitz-Picard Theorem and its variants and will be omitted
in the following. However, the reader will find the corresponding details in the literature mentioned here.
Let us also mention that there are no new results in this survey, apart from Theorem \ref{cont-ista}, which is
a generalization to the continuous setting of the $O\left(\frac{1}{n}\right)$ rate for objective function
values for prox-gradient schemes with no acceleration (see also ISTA in \cite{BecTeb09}).

The main challenge in the analysis of the dynamical systems is defining an appropriate energy functional in
order to conduct the investigations in the framework of Lyapunov theory. We start with
first order dynamics governed by a nonexpansive operator with the aim of approaching the set of its fixed
points \cite{b-c-dyn-KM}. While at a first look, this seems to be quite far from monotone inclusions or convex optimization
problems, let us underline that this offers a setting in which also the aforementioned problems can be
embedded. In this setting the energy functional is usually expressed by means of the distance of the trajectory to a solution of the problem under consideration. The continuous version of the Opial Lemma plays a decisive role on the analysis. As particular cases we mention the dynamical systems
of forward-backward type, forward-backward-forward or Douglas-Rachford. The case of convex optimization problems
is also considered.

Afterwards we continue with second-order dynamical systems \cite{b-c-sicon2016}. These are motivated by the
fact that by time discretization inertial terms are induced in the algorithmic schemes which are
responsible for acceleration (see also the works of Polyak \cite{polyak}, Nesterov \cite{nes, Nes83}, Bertsekas \cite{bertsekas} in the discrete case).
Also in this setting we start with a very general dynamical system formulated by means of a cocoercive operator
and then consider particular cases for monotone inclusions and convex optimization problems. Relations
to other types of dynamics considered in the literature are also mentioned.

Finally, we consider dynamical systems for non-convex and non-smooth optimization problems \cite{bc-forder-kl}.
How to find the appropriate energy functional in this context is not trivial.
The approach for proving asymptotic convergence for the trajectory towards a critical point of the objective
function, uses three main ingredients
(see \cite{alv-att-bolte-red} for the continuous case and also \cite{att-b-sv2013, b-sab-teb} for a similar approach in the discrete setting).
Namely, we show a sufficient decrease property along the trajectories of a regularization of the objective function, the existence of a
subgradient lower bound for the trajectories and,
finally, we obtain convergence by making use of the Kurdyka-\L{}ojasiewicz property of the objective function.

{\bf 1.2 Notations.} Let us fix a few notations used throughout the paper. Let $\N= \{0,1,2,...\}$ be the set of nonnegative integers. Let ${\cal H}$ be a real Hilbert space with inner product
$\langle\cdot,\cdot\rangle$ and associated norm $\|\cdot\|=\sqrt{\langle \cdot,\cdot\rangle}$.

For readers convenience let us recall some standard notions and results in monotone operator theory
which will be used in the following (see also \cite{bo-van, bauschke-book, simons}). For an arbitrary set-valued operator $A:{\cal H}\rightrightarrows {\cal H}$ we denote by
$\gr A=\{(x,u)\in {\cal H}\times {\cal H}:u\in Ax\}$ its graph.
We use also the notation $\zer A=\{x\in{\cal{H}}:0\in Ax\}$ for the set of zeros of $A$. We say that $A$ is monotone, if $\langle x-y,u-v\rangle\geq 0$ for all $(x,u),(y,v)\in\gr A$. A monotone operator $A$ is said to be maximally monotone, if there exists no proper monotone extension of the graph of $A$ on ${\cal H}\times {\cal H}$.
The resolvent of $A$, $J_A:{\cal H} \rightrightarrows {\cal H}$, is defined by $J_A=(\id_{{\cal H}}+A)^{-1}$, where $\id_{{\cal H}} :{\cal H} \rightarrow {\cal H}, \id_{\cal H}(x) = x$ for all $x \in {\cal H}$, is the identity operator on ${\cal H}$. Moreover, if $A$ is maximally monotone, then $J_A:{\cal H} \rightarrow {\cal H}$ is single-valued and maximally monotone
(see \cite[Proposition 23.7 and Corollary 23.10]{bauschke-book}). For an arbitrary $\gamma>0$ we have (see \cite[Proposition 23.2]{bauschke-book})
\begin{equation}p\in J_{\gamma A}x \ \mbox{if and only if} \ (p,\gamma^{-1}(x-p))\in\gr A.\end{equation}

The operator $A$ is said to be uniformly monotone if there exists an increasing function
$\phi_A : [0,+\infty) \rightarrow [0,+\infty]$ that vanishes only at $0$, and
$\langle x-y,u-v \rangle \geq \phi_A \left( \| x-y \|\right)$ for every $(x,u)\in\gr A$ and $(y,v) \in \gr A$.
If this property is fulfilled in the special case $\phi_A(t)=\gamma t^2$, with $\gamma > 0$, , we say that $A$ is $\gamma$-strongly monotone. We consider also the class of cocoercive operators: the single valued mapping
$B:{\cal H}\rightarrow {\cal H}$ is $\gamma$-cocoercive,
if $\langle x-y,Bx-By\rangle\geq \gamma\|Bx-By\|^2$ for all $x,y\in {\cal H}$. Notice that this is nothing else than  $B^{-1}$ is $\gamma$-strongly monotone.

We recall some standard notations and facts in convex analysis. For a proper, convex and
lower semicontinuous function $f:{\cal H}\rightarrow\R\cup\{+\infty\}$, its (convex) subdifferential at $x\in {\cal H}$ is defined as
$$\partial f(x)=\{u\in {\cal H}:f(y)\geq f(x)+\<u,y-x\> \ \forall y\in {\cal H}\}.$$ When seen as a set-valued mapping, it is a
maximally monotone operator (see \cite{rock}) and for $\gamma >0$, its resolvent is given by
$J_{\gamma \partial f}=\prox_{\gamma f}$ (see \cite{bauschke-book}),
where $\prox_{\gamma f}:{\cal H}\rightarrow {\cal H}$,
\begin{equation}\label{prox-def}\prox\nolimits_{\gamma f}(x)=\argmin_{y\in {\cal H}}\left \{f(y)+\frac{1}{2\gamma}\|y-x\|^2\right\},
\end{equation}
denotes the proximal point operator of $f$.
The function $f$ is said to be
$\nu$-strongly convex, where $\nu>0$, if $f-(\nu/2)\|\cdot\|^2$ is a convex function. Let us mention that if $f$ is
$\nu$-strongly convex, then $\partial f$ is $\nu$-strongly monotone, see \cite[Example 22.3(iv)]{bauschke-book}.

\section{First order dynamical systems}

Let $T:{\cal H}\rightarrow {\cal H}$ be a nonexpansive mapping (that is $\|Tx-Ty\|\leq\|x-y\|$ for all $x,y\in{\cal H}$),
$\lambda:[0,+\infty)\rightarrow [0,1]$ be a Lebesgue measurable function and $x_0\in {\cal H}$. We are concerned with the following dynamical system:

\begin{equation}\label{dyn-syst-KM}\left\{
\begin{array}{ll}
\dot x(t)=\lambda(t)\big(T(x(t))-x(t)\big)\\
x(0)=x_0.
\end{array}\right.\end{equation}

As in \cite{att-sv2011, abbas-att-sv}, we consider the following definition of an absolutely continuous function.

\begin{definition}\label{abs-cont} \rm (see, for instance, \cite{att-sv2011, abbas-att-sv}) A function $f:[0,b]\rightarrow {\cal H}$ (where $b>0$) is said to be absolutely continuous if one of the
following equivalent properties holds:

(i)  there exists an integrable function $g:[0,b]\rightarrow {\cal H}$ such that $$f(t)=f(0)+\int_0^t g(s)ds \ \ \forall t\in[0,b];$$

(ii) $f$ is continuous and its distributional derivative is Lebesgue integrable on $[0,b]$;

(iii) for every $\varepsilon > 0$, there exists $\eta >0$ such that for any finite family of intervals $I_k=(a_k,b_k)\subseteq [a,b]$ we have:
$$\left(I_k\cap I_j=\emptyset \mbox{ and }\sum_k|b_k-a_k| < \eta\right)\Longrightarrow \sum_k\|f(b_k)-f(a_k)\| < \varepsilon.$$
\end{definition}

\begin{remark}\label{rem-abs-cont}\rm (a) It follows from the definition that an absolutely continuous function is differentiable almost
everywhere, its derivative coincides with its distributional derivative almost everywhere and one can recover the function from its derivative $f'=g$
by the integration formula (i).

(b) If $f:[0,b]\rightarrow {\cal H}$ (where $b>0$) is absolutely continuous and $B:{\cal H}\rightarrow {\cal H}$ is $L$-Lipschitz continuous
(where $L\geq 0$), then the function $h=B\circ f$ is absolutely continuous. This can be easily verified by considering the characterization in
Definition \ref{abs-cont}(iii). Moreover, $h$ is almost everywhere differentiable and the inequality $\|h'(\cdot)\|\leq L\|f'(\cdot)\|$ holds almost everywhere.
\end{remark}

We say that $f:[0,+\infty]\rightarrow {\cal H}$ is locally absolutely continuous if it is absolutely continuous 
on every compact interval $[0,b]$, with $b > 0$.

\begin{definition}\label{str-sol}\rm We say that $x:[0,+\infty)\rightarrow {\cal H}$ is a strong global solution of \eqref{dyn-syst-KM} if the
following properties are satisfied:

(i) $x:[0,+\infty)\rightarrow {\cal H}$ is locally absolutely continuous;

(ii) $\dot x(t)=\lambda(t)\big(T(x(t))-x(t)\big)$ for all $t\in[0,+\infty)$;

(iii) $x(0)=x_0$.
\end{definition}

The existence and uniqueness of strong global solutions of \eqref{dyn-syst-KM} is a consequence of the Cauchy-Lipschitz theorem for absolutely continues trajectories (see for example
\cite[Proposition 6.2.1]{haraux}, \cite[Theorem 54]{sontag}).

In order to proceed with the asymptotic analysis, we need the following preparatory results.
\begin{lemma}\label{fejer-cont1} (\!\!\cite[Lemma 5.1]{abbas-att-sv}) Suppose that $F:[0,+\infty)\rightarrow\R$ is locally absolutely continuous and bounded from  below and that
there exists $G\in L^1([0,+\infty))$ such that for almost every $t \in [0,+\infty)$ $$\frac{d}{dt}F(t)\leq G(t).$$
Then there exists $\lim_{t\rightarrow \infty} F(t)\in\R$.
\end{lemma}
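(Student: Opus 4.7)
The plan is to introduce the auxiliary function
\[
H(t) = F(t) - \int_0^t G(s)\,ds
\]
and show that it is non-increasing and bounded below, so it admits a finite limit; the conclusion then follows because $\int_0^t G(s)\,ds$ converges as $t\to\infty$ by the integrability of $G$.

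First I would observe that since $F$ is locally absolutely continuous and $t\mapsto \int_0^t G(s)\,ds$ is absolutely continuous on every compact interval (as $G\in L^1$), the function $H$ is itself locally absolutely continuous. By Remark \ref{rem-abs-cont}(a), $H$ is differentiable almost everywhere with distributional derivative coinciding with its pointwise derivative, and
\[
\frac{d}{dt}H(t) = \frac{d}{dt}F(t) - G(t) \leq 0 \quad \text{for a.e. } t\in[0,+\infty).
\]
Since a locally absolutely continuous function can be recovered from its derivative by integration, this yields that $H$ is non-increasing on $[0,+\infty)$.

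Next I would show $H$ is bounded below. Because $G\in L^1([0,+\infty))$, we have $\left|\int_0^t G(s)\,ds\right| \leq \|G\|_{L^1}$ for every $t\geq 0$, and combined with the assumption that $F$ is bounded below by some constant $m\in\R$ this gives
\[
H(t) \geq m - \|G\|_{L^1} \quad \forall t\geq 0.
\]
A non-increasing function bounded below on $[0,+\infty)$ admits a finite limit, so $\lim_{t\to\infty} H(t) \in \R$ exists. Finally, since $\lim_{t\to\infty} \int_0^t G(s)\,ds = \int_0^{+\infty} G(s)\,ds \in \R$, we conclude
\[
\lim_{t\to\infty} F(t) = \lim_{t\to\infty} H(t) + \int_0^{+\infty} G(s)\,ds \in \R.
\]

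There is no genuine obstacle here; the only subtlety is justifying that the pointwise a.e. inequality on the derivative of $H$ transfers to monotonicity of $H$ itself, which is precisely the content of the fundamental theorem of calculus for absolutely continuous functions (equivalently, property (i) of Definition \ref{abs-cont}).
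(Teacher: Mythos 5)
Your argument is correct and is precisely the standard proof of this lemma as given in the cited reference \cite[Lemma 5.1]{abbas-att-sv} (the paper itself only recalls the statement without proof): the function $H(t)=F(t)-\int_0^t G(s)\,ds$ is locally absolutely continuous with nonpositive derivative almost everywhere, hence non-increasing by the integration formula of Definition \ref{abs-cont}(i), and it is bounded below since $\bigl|\int_0^t G(s)\,ds\bigr|\leq\|G\|_{L^1}$. Nothing further is needed.
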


\begin{lemma}\label{fejer-cont2} (\!\!\cite[Lemma 5.2]{abbas-att-sv}) If $1 \leq p < \infty$, $1 \leq r \leq \infty$, $F:[0,+\infty)\rightarrow[0,+\infty)$ is
locally absolutely continuous, $F\in L^p([0,+\infty))$, $G:[0,+\infty)\rightarrow\R$, $G\in  L^r([0,+\infty))$ and
for almost all $t$ $$\frac{d}{dt}F(t)\leq G(t),$$ then $\lim_{t\rightarrow +\infty} F(t)=0$.
\end{lemma}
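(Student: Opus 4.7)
I would argue by contradiction, exploiting a uniform equi-continuity estimate for the integral of $G$ (deduced from the $L^r$ hypothesis) in order to propagate a pointwise lower bound on $F$ at a hypothetical ``bad'' sequence $t_n$ to a lower bound on an interval of fixed positive length, which then contradicts $F\in L^p$. The first step is to show: for every $\varepsilon' > 0$ there exists $\delta > 0$ such that
$$\int_a^{b}|G(u)|\,du\leq \varepsilon' \qquad \text{whenever } 0\leq a<b \text{ and } b-a\leq \delta.$$
When $r=\infty$ take $\delta=\varepsilon'/\|G\|_\infty$; when $1<r<\infty$ this follows from H\"older's inequality, since $\int_a^b |G|\leq (b-a)^{1-1/r}\|G\|_r$, so $\delta=(\varepsilon'/\|G\|_r)^{r/(r-1)}$ works; and when $r=1$ it is precisely the absolute continuity of the Lebesgue integral of $|G|\in L^1$.

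Now suppose for contradiction that $F(t)\not\to 0$ as $t\to+\infty$. Since $F\geq 0$, there exist $\varepsilon>0$ and an increasing sequence $(t_n)$ with $t_n\to+\infty$ and $F(t_n)\geq \varepsilon$ for all $n$. Applying the equi-continuity fact with $\varepsilon'=\varepsilon/2$, I fix the corresponding $\delta>0$ and, after passing to a subsequence (still denoted $(t_n)$), I may assume $t_{n+1}-t_n\geq \delta$, so the intervals $[t_n-\delta,t_n]$ are pairwise disjoint. Using local absolute continuity of $F$ together with $\dot F\leq G$ a.e., for any $s\in[t_n-\delta,t_n]$ one obtains
$$F(t_n)-F(s)=\int_s^{t_n}\dot F(u)\,du \leq \int_s^{t_n} G(u)\,du \leq \int_s^{t_n}|G(u)|\,du \leq \varepsilon/2,$$
hence $F(s)\geq \varepsilon/2$ throughout $[t_n-\delta,t_n]$.

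Combining these lower bounds yields
$$\int_0^{+\infty} F(s)^p \, ds \geq \sum_{n} \int_{t_n-\delta}^{t_n} F(s)^p \, ds \geq \sum_n (\varepsilon/2)^p \delta = +\infty,$$
contradicting $F\in L^p([0,+\infty))$, so $F(t)\to 0$. The main obstacle I expect is the first step: one must recognise that the $L^r$ hypothesis on $G$ supplies a modulus of equi-integrability that is uniform in the location of the interval, so that the pointwise bound at $t_n$ propagates to a full interval of length $\delta$ independent of $n$. Once this observation is in hand, the rest is a routine disjoint-intervals contradiction with $F\in L^p$.
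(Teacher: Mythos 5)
Your proof is correct and complete: the uniform equi-integrability of $G$ obtained from the $L^r$ hypothesis (via H\"older for $1<r\le\infty$ and absolute continuity of the integral for $r=1$), the backward propagation of the bound $F(t_n)\ge\varepsilon$ over $[t_n-\delta,t_n]$ using $F(t_n)-F(s)=\int_s^{t_n}\dot F\le\int_s^{t_n}|G|$, and the disjoint-intervals contradiction with $F\in L^p$ all hold. The survey itself does not reprove this lemma but simply cites \cite[Lemma 5.2]{abbas-att-sv}, and your argument is exactly the standard one given there, so there is nothing further to reconcile; the only (harmless) degenerate case left implicit is $G=0$ a.e., where $F$ is nonincreasing and the conclusion is immediate.
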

The next result which we recall here is the continuous version of the Opial Lemma (see for example \cite[Lemma 5.3]{abbas-att-sv}, \cite[Lemma 1.10]{abbas-att-arx14}).

\begin{lemma}\label{opial} Let $S \subseteq {\cal H}$ be a nonempty set and $x:[0,+\infty)\rightarrow{\cal H}$ a given map. Assume that

(i) for every $z\in S$, $\lim_{t\rightarrow+\infty}\|x(t)-z\|$ exists;

(ii) every weak sequential cluster point of the map $x$ belongs to $S$.

\noindent Then there exists $x_{\infty}\in S$ such that $w-\lim_{t\rightarrow+\infty}x(t)=x_{\infty}$.
\end{lemma}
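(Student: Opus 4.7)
The plan is to follow the classical Opial argument adapted to continuous time: establish boundedness and hence the existence of weak cluster points, then use hypothesis (i) applied to two candidate cluster points to force uniqueness via an inner-product identity.

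First I would observe that hypothesis (i) gives boundedness: pick any $z \in S$ (nonempty by assumption); since $\lim_{t\to+\infty}\|x(t)-z\|$ exists in $\R$, the map $t \mapsto \|x(t)\|$ is bounded on $[0,+\infty)$. Boundedness together with the reflexivity of $\cal H$ ensures that for every sequence $t_n \to +\infty$ there is a subsequence along which $x(\cdot)$ converges weakly, so at least one weak sequential cluster point exists, and by (ii) every such cluster point lies in $S$.

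The main work is the uniqueness step. Suppose $x_1, x_2$ are two weak sequential cluster points of $x$, realized along $t_n \to +\infty$ and $s_n \to +\infty$ respectively; both belong to $S$ by (ii). Expanding the squared norms, for every $t \geq 0$ one has
\begin{equation*}
\|x(t)-x_1\|^2 - \|x(t)-x_2\|^2 = \|x_1\|^2 - \|x_2\|^2 - 2\langle x(t),\, x_1 - x_2\rangle.
\end{equation*}
By (i), the left-hand side has a limit as $t \to +\infty$, so the scalar map $t \mapsto \langle x(t),\, x_1 - x_2\rangle$ converges to some $\ell \in \R$. Evaluating this limit along the two subsequences and using weak convergence $x(t_n) \rightharpoonup x_1$ and $x(s_n) \rightharpoonup x_2$, I get $\ell = \langle x_1, x_1 - x_2\rangle$ and $\ell = \langle x_2, x_1 - x_2\rangle$. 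Subtracting yields $\|x_1 - x_2\|^2 = 0$, so $x_1 = x_2$.

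Finally, I would wrap up by noting that a bounded net in a reflexive Hilbert space with a single weak sequential cluster point converges weakly to that cluster point: if it did not, there would exist a neighborhood $V$ of $x_\infty$ in the weak topology and a sequence $t_n \to +\infty$ with $x(t_n) \notin V$; by boundedness this sequence has a weakly convergent subsequence, whose limit lies in $S$ by (ii) and must equal $x_\infty$ by the uniqueness just established, contradicting $x(t_n) \notin V$. I expect the uniqueness identity to be the only subtle step; the rest is essentially bookkeeping with weak compactness.
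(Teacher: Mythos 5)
Your proof is correct and is essentially the standard argument for the continuous Opial lemma found in the references the paper cites for this result (the paper itself omits the proof): boundedness from (i), weak sequential compactness, uniqueness of the weak cluster point via the identity $\|x(t)-x_1\|^2-\|x(t)-x_2\|^2=\|x_1\|^2-\|x_2\|^2-2\langle x(t),x_1-x_2\rangle$, and a subsequence-extraction argument to upgrade uniqueness to weak convergence. No gaps.
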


\begin{theorem}\label{conv-KM} Let $T:{\cal H}\rightarrow {\cal H}$ be a nonexpansive mapping such that $\fix T\neq\emptyset$,
$\lambda:[0,+\infty)\rightarrow [0,1]$ a Lebesgue measurable function and
$x_0\in {\cal H}$. Suppose that either
$$\int_0^{+\infty}\lambda(t)(1-\lambda(t))dt=+\infty \ \mbox{or} \ \inf_{t\geq 0}\lambda(t)>0.$$
Let  $x:[0,+\infty)\rightarrow{\cal H}$ be the unique strong global solution of \eqref{dyn-syst-KM}. Then the following statements are true:

(i) the trajectory $x$ is bounded and $\int_0^{+\infty}\|\dot x(t)\|^2dt<+\infty$;

(ii) $\lim_{t\rightarrow+\infty}(T(x(t))-x(t))=0$;

(iii) $\lim_{t\rightarrow+\infty}\dot x(t)=0$;

(iv) $x(t)$ converges weakly to a point in $\fix T$, as $t\rightarrow+\infty$.
\end{theorem}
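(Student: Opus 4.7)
The plan is to prove (i)--(iii) by a Fej\'er-monotonicity / Lyapunov computation and then (iv) by the Opial lemma (Lemma \ref{opial}). For $z \in \fix T$ let $h(t) = \tfrac{1}{2}\|x(t) - z\|^2$; differentiating and using $\dot x = \lambda(T\circ x - x)$, the three-point identity, and nonexpansivity of $T$ with $Tz = z$, I would derive the Fej\'er estimate
$\dot h(t) \leq -\tfrac{\lambda(t)}{2}\|T(x(t)) - x(t)\|^2$.
This immediately yields that $\|x(\cdot) - z\|$ is non-increasing for every $z \in \fix T$ (so $x$ is bounded and $\lim_{t\to+\infty}\|x(t) - z\|$ exists, giving Opial's condition (i)), and integration gives $\int_0^{+\infty}\lambda(t)\|T(x(t)) - x(t)\|^2\,dt < +\infty$. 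Since $\lambda(t) \in [0,1]$, one has $\|\dot x(t)\|^2 = \lambda(t)^2\|T(x(t)) - x(t)\|^2 \leq \lambda(t)\|T(x(t)) - x(t)\|^2$, which proves (i).

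Part (ii) is the central point and splits according to the hypothesis. If $\inf_t \lambda(t) > 0$, then the integrability above upgrades to $\|T(x(\cdot)) - x(\cdot)\|^2 \in L^1$; since $T$ is $1$-Lipschitz and the trajectory is bounded, $F(t) := \|T(x(t)) - x(t)\|^2$ is locally absolutely continuous with $\dot F(t) \leq 4M\|\dot x(t)\|$ a.e.\ for a suitable constant $M$, and $\|\dot x\| \in L^2$ by (i), so Lemma \ref{fejer-cont2} with $p = 1$, $r = 2$ yields $F(t) \to 0$. Under the alternative $\int_0^{+\infty}\lambda(1-\lambda)\,dt = +\infty$, which forces $\int_0^{+\infty}\lambda\,dt = +\infty$, I would first show that $\phi(t) := \|T(x(t)) - x(t)\|$ is \emph{non-increasing}. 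This is the continuous analog of the standard discrete Krasnosel'ski\u{\i}--Mann estimate: for $h > 0$ small, $x(t+h) = y + o(h)$ with $y := (1-h\lambda(t))x(t) + h\lambda(t)T(x(t))$, and a direct triangle-inequality calculation using nonexpansivity of $T$ gives $\|T(y) - y\| \leq \|T(x(t)) - x(t)\|$. Consequently $\phi(t+h) - \phi(t) \leq o(h)$, so the right upper Dini derivative of the locally Lipschitz function $\phi$ is nonpositive a.e., and $\phi$ is non-increasing. Thus $L := \lim_{t\to+\infty}\phi(t)$ exists; if $L > 0$, then $\int_0^{+\infty}\lambda(t)\phi(t)^2\,dt \geq (L/2)^2 \int_{t_0}^{+\infty}\lambda(t)\,dt = +\infty$, contradicting the integrability from the previous paragraph. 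Hence $L = 0$.

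Part (iii) is then immediate from $\|\dot x(t)\| \leq \|T(x(t)) - x(t)\|$ (since $\lambda \leq 1$) combined with (ii). For (iv), I would apply Lemma \ref{opial} with $S = \fix T$: the first Opial hypothesis is exactly the Fej\'er monotonicity already established. For the second, if $x(t_n) \rightharpoonup \bar x$ along some $t_n \to +\infty$, then (ii) gives $(\id - T)x(t_n) \to 0$ in norm, and the classical demiclosedness at $0$ of $\id - T$ for nonexpansive $T$ (see \cite{bauschke-book}) forces $\bar x \in \fix T$. The main technical obstacle throughout is the continuous-time Krasnosel'ski\u{\i}--Mann monotonicity of $\phi$; once that transfer from the discrete setting is in hand, everything else is routine Lyapunov and Opial bookkeeping.
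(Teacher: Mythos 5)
Your proposal is correct and follows the same overall skeleton as the paper: the Lyapunov function $\tfrac12\|x(t)-z\|^2$ for $z\in\fix T$, integrability of the residual, monotonicity of $t\mapsto\|T(x(t))-x(t)\|$, a case split on the two hypotheses, and the continuous Opial lemma with demiclosedness for the weak convergence. Your Fej\'er estimate $\dot h\le-\tfrac{\lambda}{2}\|Tx-x\|^2$ is exactly what the paper's inequality \eqref{ineq-x-dtx} amounts to after substituting $\|\dot x\|^2=\lambda^2\|Tx-x\|^2$, so parts (i), (iii) and (iv) are handled identically.

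The one step where you genuinely diverge is the proof that $\phi(t)=\|T(x(t))-x(t)\|$ is non-increasing. The paper differentiates $\tfrac12\|T(x(t))-x(t)\|^2$ directly, using that $t\mapsto T(x(t))$ is a.e.\ differentiable with $\|\tfrac{d}{dt}T(x(t))\|\le\|\dot x(t)\|$ (Remark \ref{rem-abs-cont}(b)), and then cancels the two terms via Cauchy--Schwarz. You instead transfer the discrete Krasnosel'ski\u{\i}--Mann estimate: writing $x(t+h)=y+o(h)$ with $y=(1-h\lambda(t))x(t)+h\lambda(t)T(x(t))$ and using $\|Ty-y\|\le\|Tx-x\|$ for convex combinations, you bound the right Dini derivative of the locally Lipschitz function $\phi$ by $0$ a.e. Both arguments are sound; yours makes the link to the discrete KM iteration transparent and avoids differentiating $T\circ x$, while the paper's is a shorter one-line computation once Remark \ref{rem-abs-cont}(b) is available. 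A further minor (harmless) difference: in the case $\int_0^{+\infty}\lambda(1-\lambda)\,dt=+\infty$ you contradict the stronger bound $\int_0^{+\infty}\lambda\phi^2\,dt<+\infty$ using $\int_0^{+\infty}\lambda\,dt=+\infty$, whereas the paper works with $\int_0^{+\infty}\lambda(1-\lambda)\phi^2\,dt<+\infty$ directly; and in the case $\inf_t\lambda(t)>0$ you invoke Lemma \ref{fejer-cont2} with $G=4M\|\dot x\|\in L^2$ rather than with $G=0$ coming from the monotonicity of $\phi$. No gaps.
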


\begin{proof} We rely on Lyapunov analysis combined with the Opial Lemma. We take an arbitrary $y\in\fix T$.
From the nonexpansiveness of $T$ and \cite[Corollary 2.14]{bauschke-book} we obtain:
\begin{align*}
\frac{d}{dt}\|x(t)-y\|^2   = & \ 2\<\dot x(t),x(t)-y\>=\|\dot x(t)+x(t)-y\|^2-\|x(t)-y\|^2-\|\dot x(t)\|^2\\
                           = & \ \|\lambda(t)(T(x(t))-y)+(1-\lambda(t))(x(t))-y)\|^2-\|x(t)-y\|^2-\|\dot x(t)\|^2\\
                          = & \ \lambda(t)\|T(x(t))-y\|^2+(1-\lambda(t))\|x(t)-y\|^2 \\
                            & - \lambda(t)(1-\lambda(t))\|T(x(t))-x(t)\|^2 -\|x(t)-y\|^2-\|\dot x(t)\|^2\\
                         \leq & \ -\lambda(t)(1-\lambda(t))\|T(x(t))-x(t)\|^2-\|\dot x(t)\|^2.
\end{align*}

Hence for all $t\geq 0$ we have that
\begin{equation}\label{ineq-x-dtx}\frac{d}{dt}\|x(t)-y\|^2+\lambda(t)(1-\lambda(t))\|T(x(t))-x(t)\|^2+\|\dot x(t)\|^2\leq 0.\end{equation}

Since $\lambda(t)\in[0,1]$ for all $t\geq 0$, from \eqref{ineq-x-dtx} it follows that $t\mapsto\|x(t)-y\|$ is decreasing,
hence $\lim_{t\rightarrow+\infty}\|x(t)-y\|$ exists.
From here we obtain the boundedness of the trajectory and by integrating \eqref{ineq-x-dtx} we deduce also that $\int_0^{+\infty}\|\dot x(t)\|^2dt<+\infty$ and
\begin{equation}\label{int-l-T}\int_0^{+\infty}\lambda(t)(1-\lambda(t))\|T(x(t))-x(t)\|^2dt<+\infty,\end{equation}
thus (i) holds. Since $y\in\fix T$ has been chosen arbitrary, the first assumption in the continuous version of Opial Lemma is fulfilled.

We show in the following that $\lim_{t\rightarrow+\infty}\|T(x(t))-x(t)\|$ exists and it is a real number. This is immediate if we show that
the function $t\mapsto\frac{1}{2}\|T(x(t))-x(t)\|^2$ is decreasing. According to Remark \ref{rem-abs-cont}(b), the function
$t\mapsto T(x(t))$ is almost everywhere differentiable and $\|\frac{d}{dt}T(x(t))\|\leq \|\dot x(t)\|$ holds for almost all $t\geq 0$.
Moreover, by the first equation of \eqref{dyn-syst-KM} we have
\begin{align*}
\frac{d}{dt}\left(\frac{1}{2}\|T(x(t))-x(t)\|^2\right) = & \<\frac{d}{dt}T(x(t))-\dot x(t),T(x(t))-x(t)\>\\
                                                       = & -\<\dot x(t),T(x(t))-x(t)\>+\<\frac{d}{dt}T(x(t)),T(x(t))-x(t)\>\\
                                                       = & -\lambda(t)\|T(x(t))-x(t)\|^2+\<\frac{d}{dt}T(x(t)),T(x(t))-x(t)\>\\
                                                     \leq & -\lambda(t)\|T(x(t))-x(t)\|^2+\|\dot x(t)\| \cdot \|T(x(t))-x(t)\|=0,
\end{align*}
hence $\lim_{t\rightarrow + \infty}\|T(x(t))-x(t)\|$ exists and is a real number.

(a) Firstly, let us assume that $\int_0^{+\infty}\lambda(t)(1-\lambda(t))dt=+\infty$. This immediately implies by \eqref{int-l-T}
that $\lim_{t\rightarrow+\infty}(T(x(t))-x(t))=0$, thus (ii) holds. Taking into account that $\lambda$ is bounded,
from \eqref{dyn-syst-KM} and (ii) we deduce (iii). For the last property of the theorem we need to verify the second assumption
of the Opial Lemma. Let $\ol x\in {\cal H}$ be a weak sequential cluster point of $x$, that is, there exists a sequence $t_n\rightarrow+\infty$
(as $n\rightarrow\infty$) such that $(x(t_n))_{n\in\N}$ converges weakly to $\ol x$. Applying the demiclosedness principle
\cite[Corollary 4.18]{bauschke-book} and (ii) we obtain $\ol x\in\fix T$
and the conclusion follows.

(b) We suppose now that $\inf_{t\geq 0}\lambda(t)>0$. From the first relation of \eqref{dyn-syst-KM} and (i) we easily deduce
that $Tx-x\in L^2([0,\infty),{\cal H})$, hence the function $t\mapsto\frac{1}{2}\|T(x(t))-x(t)\|^2$ belongs to $L^1([0,\infty))$. Since
$\frac{d}{dt}\left(\frac{1}{2}\|T(x(t))-x(t)\|^2\right)\leq 0$ for almost all $t\geq 0$, we obtain by applying Lemma \ref{fejer-cont2} that
$\lim_{t\rightarrow\infty}\|T(x(t))-x(t)\|^2=0$, thus (ii) holds. The rest of the proof can be done in the lines of case (a) considered above.
\end{proof}

\begin{remark} We refer the reader to \cite{c-eb-tam} for a recent contribution concerning convergence rates of the trajectories generated by \eqref{dyn-syst-KM} in case the operator $T$ satisfies some regularity assumptions, like 
boundedly linear/H\"{o}lder regularity. 
\end{remark}

\begin{remark} The explicit discretization of \eqref{dyn-syst-KM} with respect to the time variable $t$, with step size $h_n>0$,
yields for an initial point $x_0$ the following iterative scheme:
$$x_{n+1}=x_n+h_n\lambda_n(Tx_n-x_n) \ \forall n \geq 0.$$
By taking $h_n=1$ this becomes \begin{equation}\label{KM-discrete}x_{n+1}=x_n+\lambda_n(Tx_n-x_n) \ \forall n \geq 0,\end{equation}
which is the classical Krasnosel'ski\u{\i}--Mann algorithm for finding the set of fixed points of the nonexpansive operator $T$
(see \cite[Theorem 5.14]{bauschke-book}). Let us mention that the convergence of \eqref{KM-discrete} is guaranteed under the condition
$\sum_{n \in \N} \lambda_n(1-\lambda_n)=+\infty$. Notice that in case $\lambda_n=1$ for all $n \in\N$ and for an initial point $x_0$ different from $0$,
the convergence of \eqref{KM-discrete} can fail, as it happens for instance for the operator $T=-\id$.
In contrast to this, as pointed out in Theorem \ref{conv-KM}, the dynamical system \eqref{dyn-syst-KM} has a strong global solution and the convergence
of the trajectory is guaranteed also in case $\lambda(t)=1$ for all $t\geq 0$.
\end{remark}

\begin{remark}\label{average}
The conclusions of Theorem \ref{conv-KM} remain valid for the class of averaged operators.
Let $\alpha\in(0,1)$ be fixed. We say that $R:{\cal H}\rightarrow{\cal H}$ is {\it $\alpha$-averaged} if there exists a nonexpansive operator
$T:{\cal H}\rightarrow{\cal H}$ such that $R=(1-\alpha)\id+\alpha T$. In this case we consider  $\lambda:[0,+\infty)\rightarrow [0,1/\alpha]$ and the condition
$\int_0^{+\infty}\lambda(t)(1-\lambda(t))dt=+\infty$ is replaced by $\int_0^{+\infty}\lambda(t)(1-\alpha\lambda(t))dt=+\infty$.
\end{remark}

\begin{remark} Relying on time rescaling arguments and using the link with a dynamical system governed by a
cocoercive operator, it has been shown in \cite[Section 4]{b-c-dyn-KM} that the conclusion of the
theorem remains valid under the condition $\int_0^{+\infty}\lambda(t)dt=+\infty$, which is weaker than
the ones considered above. Also the specific upper bound for $\lambda$ is not needed anymore. Our option
to present the results in this manner is motivated by the link with the discrete case and the fact that
for the convergence rates in Theorem \ref{conv-KM-rate2} the upper bound will be used again.
\end{remark}

In the following we investigate the convergence rate of the trajectories of the dynamical system \eqref{dyn-syst-KM}. This will be done in terms of the fixed point residual function $t\mapsto\|T(x(t))-x(t)\|$ and  of $t\mapsto\|\dot x(t)\|$. Notice that convergence rates for the discrete iteratively generated algorithm
\eqref{KM-discrete} have been investigated in \cite{corman-yuan, davis-yin, liang-fadili-peyre}.

\begin{theorem}\label{conv-KM-rate2} Let $T:{\cal H}\rightarrow {\cal H}$ be a nonexpansive mapping such that $\fix T\neq\emptyset$,
$\lambda:[0,+\infty)\rightarrow (0,1)$ a Lebesgue measurable function and
$x_0\in {\cal H}$. Suppose that
$$0<\inf_{t\geq 0}\lambda(t)\leq\sup_{t\geq 0}\lambda(t)<1.$$
Let $x:[0,+\infty)\rightarrow{\cal H}$ be the unique strong global solution of \eqref{dyn-syst-KM}. Then for all $t\geq 0$ we have
$$t\|\dot x(t)\|^2\leq t\|T(x(t))-x(t)\|^2\leq \frac{2}{{\ul\tau}} \int_{t/2}^t\lambda(s)(1-\lambda(s))\|T(x(s))-x(s)\|^2 ds,$$
where $\ul\tau=\inf_{t\geq0}\lambda(t)(1-\lambda(t))>0$ and $\lim_{t\rightarrow+\infty}\int_{t/2}^t\lambda(s)(1-\lambda(s))\|T(x(s))-x(s)\|^2 ds=0$.
\end{theorem}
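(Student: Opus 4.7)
The plan is to combine three ingredients already extracted during the proof of Theorem \ref{conv-KM}: (a) the governing equation of the dynamical system, which directly ties $\|\dot x(t)\|$ to $\|T(x(t))-x(t)\|$; (b) the monotonicity of $t\mapsto\tfrac{1}{2}\|T(x(t))-x(t)\|^2$, which was established in that proof; and (c) the integrability estimate $\int_0^{+\infty}\lambda(s)(1-\lambda(s))\|T(x(s))-x(s)\|^2\,ds<+\infty$ from Theorem \ref{conv-KM}(i).

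The first inequality $t\|\dot x(t)\|^2\leq t\|T(x(t))-x(t)\|^2$ is immediate: from \eqref{dyn-syst-KM} one has $\|\dot x(t)\|=\lambda(t)\|T(x(t))-x(t)\|$, and $\lambda(t)\in(0,1)$ by assumption. For the second inequality, I would fix $t>0$ and use the fact that $s\mapsto\|T(x(s))-x(s)\|^2$ is non-increasing to conclude that for every $s\in[t/2,t]$,
\[
\|T(x(t))-x(t)\|^2\leq\|T(x(s))-x(s)\|^2.
\]
Multiplying by the nonnegative weight $\lambda(s)(1-\lambda(s))$ and integrating over $[t/2,t]$ gives
\[
\|T(x(t))-x(t)\|^2\int_{t/2}^t\lambda(s)(1-\lambda(s))\,ds\leq\int_{t/2}^t\lambda(s)(1-\lambda(s))\|T(x(s))-x(s)\|^2\,ds.
\]
Then, since the hypotheses $\inf\lambda>0$ and $\sup\lambda<1$ ensure that $\ul\tau=\inf_{s\geq 0}\lambda(s)(1-\lambda(s))>0$ (the function $u\mapsto u(1-u)$ is bounded below by the minimum of its values at the endpoints of $[\inf\lambda,\sup\lambda]\subset(0,1)$), the left-hand integral is at least $\ul\tau\cdot(t/2)$. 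Dividing through yields the announced bound.

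For the vanishing of the right-hand side as $t\to+\infty$, I would invoke Theorem \ref{conv-KM}(i) with the upper bound $\lambda\leq\sup\lambda<1$ (so that $\lambda(1-\lambda)$ does not degenerate), which guarantees $\int_0^{+\infty}\lambda(s)(1-\lambda(s))\|T(x(s))-x(s)\|^2\,ds<+\infty$. Since the tail $\int_{t/2}^{+\infty}$ of a convergent integral tends to $0$, and $\int_{t/2}^t\leq\int_{t/2}^{+\infty}$, the limit follows. There is essentially no hard step here; the only care needed is in tracking where the assumption $0<\inf\lambda\leq\sup\lambda<1$ is used, namely to ensure both $\ul\tau>0$ (for the lower bound on the integral of the weights) and the applicability of the summability estimate from Theorem \ref{conv-KM}.
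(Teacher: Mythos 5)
Your proposal is correct and follows essentially the same route as the paper: the first inequality from $\|\dot x(t)\|=\lambda(t)\|T(x(t))-x(t)\|$ with $\lambda(t)<1$, the second from the monotonicity of $t\mapsto\|T(x(t))-x(t)\|^2$ established in the proof of Theorem \ref{conv-KM} together with the lower bound $\int_{t/2}^t\lambda(s)(1-\lambda(s))\,ds\geq\ul\tau\,t/2$, and the limit from the finiteness of the integral in \eqref{int-l-T} (the paper phrases this as $f(t)-f(t/2)\to0$ for the primitive $f$, which is the same tail argument you give).
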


\begin{proof} Define the function $f:[0,+\infty)\rightarrow[0,+\infty)$,
$$f(t)=\int_0^t\lambda(s)(1-\lambda(s))\|T(x(s))-x(s)\|^2 ds.$$
According to \eqref{int-l-T}  we have that $\lim_{t\rightarrow+\infty}f(t)\in\R$.

Since $t\mapsto\frac{1}{2}\|T(x(t))-x(t)\|^2$ is decreasing (see the proof of Theorem \ref{conv-KM}), we have for all $t\geq 0:$
\begin{align*}
\|T(x(t))-x(t)\|^2\int_{t/2}^t\lambda(s)(1-\lambda(s))ds \leq & \int_{t/2}^t\lambda(s)(1-\lambda(s))\|T(x(s))-x(s)\|^2 ds\\
                                                            = & f(t)-f(t/2).
 \end{align*}
Taking into account the definition of $\ul \tau$, we easily derive
$$\frac{{\ul\tau}}{2}t\|T(x(t))-x(t)\|^2\leq \int_{t/2}^t\lambda(s)(1-\lambda(s))\|T(x(s))-x(s)\|^2 ds,$$
and the conclusion follows by using again \eqref{dyn-syst-KM}.
\end{proof}

In the following we investigate a continuous version of the forward-backward algorithm. Let $A:{\cal H}\rightrightarrows {\cal H}$ be a maximally monotone operator, $\beta>0$ and
$B:{\cal H}\rightarrow {\cal H}$ be $\beta$-cocoercive. Consider the dynamical system
\begin{equation}\label{dyn-syst-fb}\left\{
\begin{array}{ll}
\dot x(t)=\lambda(t)\left[J_{\gamma A}\Big(x(t)-\gamma B(x(t))\Big)-x(t)\right]\\
x(0)=x_0.
\end{array}\right.\end{equation}
It is immediate that \eqref{dyn-syst-fb} can be written in the form \eqref{dyn-syst-KM}
for $T=J_{\gamma A}\circ(\id -\gamma B).$ According to \cite[Corollary 23.8 and Remark 4.24(iii)]{bauschke-book}, $J_{\gamma A}$ is $1/2$-cocoercive.
Moreover, by \cite[Proposition 4.33]{bauschke-book}, $\id -\gamma B$ is $\gamma/(2\beta)$-averaged. Combining this with
\cite[Theorem 3(b)]{og-yam}, we derive that $R$ is $1/\delta$-averaged, where $\delta:=\frac{4\beta-\gamma}{2\beta}$. In the next theorem, the statements (i)-(iv) follow from
Remark \ref{average} by noticing that $\fix T=\zer(A+B)$, see \cite[Proposition 25.1(iv)]{bauschke-book}. For (v) and (vi)
we refer to \cite[Theorem 12]{b-c-dyn-KM}.

\begin{theorem}\label{fb-dyn} Let $A:{\cal H}\rightrightarrows {\cal H}$ be a maximally monotone operator, $\beta>0$ and
$B:{\cal H}\rightarrow {\cal H}$ be $\beta$-cocoercive such that $\zer(A+B)\neq\emptyset$. Let $\gamma\in(0,2\beta)$ and set
$\delta=\frac{4\beta-\gamma}{2\beta}$. Let $\lambda:[0,+\infty)\rightarrow [0,\delta]$ be a Lebesgue measurable function and
$x_0\in {\cal H}$. Suppose that either
$$\int_0^{+\infty}\lambda(t)(\delta-\lambda(t))dt=+\infty \ \mbox{or} \ \inf_{t\geq 0}\lambda(t)>0.$$ Let
$x:[0,+\infty)\rightarrow{\cal H}$ be the unique strong global solution of \eqref{dyn-syst-fb}.
Then the following statements are true:

(i) the trajectory $x$ is bounded and $\int_0^{+\infty}\|\dot x(t)\|^2dt<+\infty$;

(ii) $\lim_{t\rightarrow+\infty}\left[J_{\gamma A}\Big(x(t)-\gamma B(x(t))\Big)-x(t)\right]=0$;

(iii) $\lim_{t\rightarrow+\infty}\dot x(t)=0$;

(iv) $x(t)$ converges weakly to a point in $\zer(A+B)$, as $t\rightarrow+\infty$.

\noindent Suppose that $\inf_{t\geq 0}\lambda(t)>0$. Then the following hold:

(v) if $y\in\zer(A+B)$, then $\lim_{t\rightarrow+\infty}B(x(t))=By$ and $B$ is constant on $\zer(A+B)$;

(vi) if $A$ or $B$ is uniformly monotone, then $x(t)$ converges strongly to the unique point in $\zer(A+B)$, as $t\rightarrow+\infty$.
\end{theorem}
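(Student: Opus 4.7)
The strategy is to recast \eqref{dyn-syst-fb} in the form \eqref{dyn-syst-KM} with $T := J_{\gamma A}\circ(\id-\gamma B)$ and read off (i)--(iv) from the averaged version of Theorem \ref{conv-KM}. The choice of $\delta$ is forced on us: combining the $(1/2)$-averagedness of $J_{\gamma A}$ (\cite[Corollary 23.8, Remark 4.24(iii)]{bauschke-book}) with the $\gamma/(2\beta)$-averagedness of $\id-\gamma B$ (\cite[Proposition 4.33]{bauschke-book}, which is where the constraint $\gamma\in(0,2\beta)$ enters), the composition rule \cite[Theorem 3(b)]{og-yam} makes $T$ into a $(1/\delta)$-averaged operator. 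Since $\fix T=\zer(A+B)$ by \cite[Proposition 25.1(iv)]{bauschke-book}, (i)--(iv) follow from Remark \ref{average} applied with $\alpha=1/\delta$; this is the content already flagged in the statement.

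For (v), assuming $\inf_{t\geq 0}\lambda(t)>0$, I would fix $y\in\zer(A+B)$, set $p(t):=J_{\gamma A}(x(t)-\gamma B(x(t)))$, and aim at a sharpened Lyapunov inequality that isolates $\|Bx(t)-By\|^2$. The key step is to bound $\|p(t)-y\|^2$ from above by firm nonexpansiveness of $J_{\gamma A}$ applied to the pair $(x(t)-\gamma Bx(t),\,y-\gamma By)$, expand the squared norm and invoke $\beta$-cocoercivity of $B$ to produce a term $-\gamma(2\beta-\gamma)\|Bx(t)-By\|^2$. After injecting this into the cosine-rule identity for $\frac{d}{dt}\|x(t)-y\|^2 = 2\lambda(t)\langle p(t)-x(t), x(t)-y\rangle$, integration yields $Bx-By\in L^2([0,+\infty);{\cal H})$. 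A final step uses the $(1/\beta)$-Lipschitz continuity of $B$ (a well known consequence of cocoercivity), Remark \ref{rem-abs-cont}(b), and $\dot x\in L^2$ from (i), to dominate $\frac{d}{dt}\|Bx(t)-By\|^2$ by an $L^1$ function, so that Lemma \ref{fejer-cont2} forces $Bx(t)\to By$. That $B$ is constant on $\zer(A+B)$ is then immediate, since this limit cannot depend on the choice of $y$.

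For (vi), uniform monotonicity of either $A$ or $B$ combined with monotonicity of the other factor gives uniqueness of the zero $y^*\in\zer(A+B)$ by a two-line comparison. If $A$ is uniformly monotone with modulus $\phi_A$, I would use the inclusion $\gamma^{-1}(x(t)-p(t))-Bx(t)\in A(p(t))$ together with $-By^*\in A(y^*)$ to obtain $\phi_A(\|p(t)-y^*\|)\leq\langle \gamma^{-1}(x(t)-p(t)),\, p(t)-y^*\rangle-\langle Bx(t)-By^*,\, p(t)-y^*\rangle$; the boundedness of $p(t)$, the fact that $x(t)-p(t)\to 0$ by (ii), and $Bx(t)\to By^*$ by (v), drive both terms on the right to $0$, whence $p(t)\to y^*$ and then $x(t)\to y^*$. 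If $B$ is uniformly monotone instead, I would directly dominate $\phi_B(\|x(t)-y^*\|)$ by $\|x(t)-y^*\|\cdot\|Bx(t)-By^*\|$ and use (v) together with the boundedness of $x(t)$.

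The main obstacle is the refined descent estimate in the proof of (v): it requires combining firm nonexpansiveness of $J_{\gamma A}$ and $\beta$-cocoercivity of $B$ in exactly the right order so that the (positive) coefficient $\gamma(2\beta-\gamma)$ surfaces in front of $\|Bx(t)-By\|^2$, while simultaneously producing a squared quantity that can be safely discarded. Once this is in hand, the remaining convergences fall out of the integrability tools (Lemmas \ref{fejer-cont1}--\ref{fejer-cont2}) already developed earlier in the paper, and of the definition of uniform monotonicity, whose role in (vi) is precisely to transport convergence in the image of $B$ (or in the image of $A$ through the resolvent) into strong convergence of $x(t)$ itself.
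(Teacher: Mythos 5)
Your proposal is correct and follows the paper's route: parts (i)--(iv) are obtained exactly as in the text preceding the theorem, by writing \eqref{dyn-syst-fb} as \eqref{dyn-syst-KM} for the $1/\delta$-averaged operator $T=J_{\gamma A}\circ(\id-\gamma B)$ and invoking Remark \ref{average} together with $\fix T=\zer(A+B)$. For (v) and (vi) the paper merely cites \cite[Theorem 12]{b-c-dyn-KM}, and your sketch reproduces the standard argument given there -- firm nonexpansiveness of $J_{\gamma A}$ plus cocoercivity yielding the coefficient $\gamma(2\beta-\gamma)$ in front of $\|Bx(t)-By\|^2$, integration under $\inf_{t\ge 0}\lambda(t)>0$ to get $Bx-By\in L^2$, Lemma \ref{fejer-cont2} for the limit, and the uniform-monotonicity inequalities for strong convergence -- so it is sound.
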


\begin{remark} (i) The explicit discretization of \eqref{dyn-syst-fb} with respect to the time variable $t$, with step size $h_n>0$ and initial point $x_0$, yields the following iterative scheme:
$$\frac{x_{n+1}-x_n}{h_n}=\lambda_n\left[J_{\gamma A}\Big(x_n-\gamma Bx_n\Big)-x_n\right] \ \forall n \geq 0.$$

For $h_n=1$ this becomes
\begin{equation}\label{fb-discrete}x_{n+1}=x_n+\lambda_n\left[J_{\gamma A}\Big(x_n-\gamma Bx_n\Big)-x_n\right] \ \forall n \geq 0,\end{equation}
which is the classical forward-backward algorithm for finding the set of zeros of $A+B$
(see \cite[Theorem 25.8]{bauschke-book}). Let us mention that the convergence of \eqref{fb-discrete} is guaranteed under the condition
$\sum_{n \in \N} \lambda_n(\delta-\lambda_n)=+\infty$.

(ii) Let us notice that in case $A+B$ is strongly monotone, then rate of the convergence in (vi) is
exponential, see \cite[Theorem 1]{b-c-dyn-conv-rate}.

(iii) Notice that strong convergence of the trajectory can be induced by Tikhonov regularization.
We refer the reader to \cite{bgms} where the dynamics
\begin{equation}\label{dyn-syst-fb-tikh}\left\{
\begin{array}{ll}
\dot x(t)=\lambda(t)\left[J_{\gamma A}\Big(x(t)-\gamma B(x(t))+\epsilon(t)x(t)\Big)-x(t)\right]\\
x(0)=x_0
\end{array}\right.
\end{equation}
has been investigated under appropriate conditions imposed on the Tikhonov regularization in order
to generate strong convergence of the trajectory towards the minimal norm solution. Notice that
\eqref{dyn-syst-fb-tikh} already appears in \cite[Section 5]{bolte-2003} in case $\lambda(t)=1$, $A$ is the normal cone to
a nonempty, closed convex set $C\subseteq {\cal H}$ (that is, $J_{\gamma A}$ is the projector operator onto $C$) and $B$ is the gradient of a convex and smooth function.
\end{remark}

\begin{remark} Let us mention that in case $A=\partial f$, where $f:{\cal H}\rightarrow\R\cup\{+\infty\}$ is a proper, convex
and lower semicontinuous function defined on a real Hilbert space ${\cal H}$, and for $\lambda(t)=1$ for all $t\geq 0$, the dynamical system
\eqref{dyn-syst-fb} has been studied in \cite{abbas-att-arx14}. Notice that the weak convergence
is obtained in \cite[Theorem 5.2]{abbas-att-arx14} for a constant step-size $\gamma\in(0,4\beta)$.
\end{remark}

\begin{remark} In Theorem \ref{fb-dyn} above related to the dynamical system \eqref{dyn-syst-fb}, the cocoercivity of the operator $B$ plays an important role. Now let us stress the fact that in applications
arising from game theory for example, the operator $(x,y)\to (\nabla_x\phi(x,y),-\nabla_y\phi(x,y))$ is not cocoercive (even in the bilinear case when $\phi(x,y)=\langle x, Ky\rangle$ with K a linear and continuous
operator). Under further conditions imposed on $\phi:{\mathcal H}\times{\mathcal G}\to\R$ (${\mathcal G}$ is another real Hilbert space), like convexity
in $x$, concavity in $y$ and some smoothness conditions, this operator is monotone and Lipschitz. Hence
the desire to consider dynamical systems and numerical schemes also in this case is well motivated.

With respect to the case when $B$ is monotone and Lipschitz, let us mention two types of dynamics considered in the literature. First, a dynamical system of forward-backward-forward-type
\begin{eqnarray}\label{DS0}
\begin{cases}
\begin{aligned}
y(t)&=J_{\gamma A}(x(t) - \gamma B(x(t)))\\
\dot{x}(t)+x(t) & = y(t)+\lambda\left[ B(x(t))-B(y(t))\right] \\
x(0)&=x_0,
\end{aligned}
\end{cases}
\end{eqnarray}
introduced and investigated in \cite{banert-bot}. The time discretization of this dynamics leads to
the well-known Tseng algorithm \cite{Tseng2000}
\begin{eqnarray}\label{DS0-alg}
\begin{cases}
\begin{aligned}
y_n&=J_{\gamma A}(x_n - \gamma B(x_n))\\
x_{n+1} & = y_n+\lambda\left[ B(x_n)-B(y_n)\right],
\end{aligned}
\end{cases}
\end{eqnarray}
which plays an important role also in primal-dual type algorithms for highly structured
monotone inclusion/optimization problems, see for example \cite{combettes-pesquet}. Let us also mention that the dynamics
\eqref{DS0} and several discretizations have been considered in the context of pseudo-monotone variational
inequalities, see \cite{bcv}.

Second, the dynamical system
\begin{eqnarray}\label{cmm}
\begin{cases}
\begin{aligned}
\dot{x}(t)+x(t) &= J_{\gamma A}\left( x(t)- y(t)\right) -\dot{y}(t)\\
y(t)            &= \gamma B(x(t)) \\
x(0)&=x_0,
\end{aligned}
\end{cases}
\end{eqnarray}
has been investigated in \cite{cmt-amo}. By using the notation $z(t) = x(t) + y(t)$, it can be shown that
the dynamics \eqref{cmm} is equivalent to
\begin{equation}\label{dr-3}
  \dot{z}(t)+z(t)
  = \left(\frac{\id + R_{\gamma A} R_{\gamma B}}{2}\right)z(t),
\end{equation}
where $R_{\gamma A}=2J_{\gamma A}-\id$ is the reflected resolvent of $A$. Notice that this is a continuous
version of the Douglas-Rachford algorithm, see \cite{bauschke-book}. Now, \eqref{dr-3}
can be written in the language of \eqref{dyn-syst-KM}, since the reflected resolvent is a nonexpansive mapping.
Further, let us mention that time discretization of \eqref{cmm} leads to the numerical scheme
\begin{equation}
\label{fbr}
x_{n+1} = J_{\gamma A }\bigl(x_n - \gamma B(x_n)\bigr) - \gamma \bigl(B(x_n)-
B(x_{n-1})\bigr),
\end{equation}
which compared to Tseng's algorithm \eqref{DS0-alg} has the advantage that in each iteration it requires only one forward
evaluation of the single valued operator $B$ (see also \cite{mt} for another numerical scheme with this
property). Finally, we mention also \cite{rt} for continuous in time dynamics and numerical schemes
for monotone inclusion problems involving monotone and Lipschitz operators.
\end{remark}

The results presented in this section can be specialized in the context of the convex optimization problem:
\begin{equation}\label{opt-fg}\min_{x\in {\cal H}}\{f(x)+g(x)\}
\end{equation}
where $f:{\cal H}\rightarrow \R\cup\{+\infty\}$ is a proper, convex
and lower semicontinuous function, $\beta>0$ and
$g:{\cal H}\rightarrow \R$ is a convex and (Fr\'{e}chet) differentiable function with $1/\beta$-Lipschitz continuous gradient (which implies that $\nabla g$ is $\beta$-cocoercive, due to the celebrated Baillon-Haddad Theorem, see \cite[Corollary 8.16]{bauschke-book}). Notice that
$$\argmin_{x\in {\cal H}}\{f(x)+g(x)\}=\zer(\partial f+\nabla g)$$
and the system \eqref{dyn-syst-fb} becomes
\begin{equation}\label{dyn-syst-fb-opt}\left\{
\begin{array}{ll}
\dot x(t)=\lambda(t)\left[\prox_{\gamma f}\Big(x(t)-\gamma \nabla g(x(t))\Big)-x(t)\right]\\
x(0)=x_0.
\end{array}\right.\end{equation}

Let us underline the remarkable result that in case $\lambda(t)=1$, the trajectory of \eqref{dyn-syst-fb-opt} converges weakly to a solution of \eqref{opt-fg} with no further restriction on the positive step size $\gamma$. This has been
proved in \cite[Theorem 5.2]{abbas-att-arx14} by using the energy functional
\begin{equation}\label{en-funct}E(t,x^*)=\frac{1}{2\gamma}\|x(t)-x^*\|^2+g(x(t))-g(x^*)-\langle \nabla g(x^*),x(t)-x^*\rangle,\end{equation}
where $x^*$ is a solution of \eqref{opt-fg}.

For the prox-gradient scheme (see ISTA in \cite{BecTeb09}) it is known that we have the rate $O\left(\frac{1}{n}\right)$ for the functions values along the generated iterates. The following theorem can be seen 
as the continuous counterpart of this result. 

\begin{theorem}\label{cont-ista} In the above setting, we consider $\lambda(t)=1$ and
$\frac{\gamma}{\beta}(3+\frac{\gamma}{\beta})\leq 1$.
For $x_0\in{\cal H}$, let $x(\cdot):[0,+\infty)\rightarrow{\cal H}$ be the unique strong global solution of
\eqref{dyn-syst-fb-opt}. Then for all $T>0$ the following holds
\begin{equation}\label{1/T} 0\leq(f+g)\big(\dot x(T)+x(T)\big)-(f+g)(x^*)+\frac{1}{2\gamma}\|\dot x(T)\|^2\leq \frac{1}{2\gamma T}\|x_0-x^*\|^2
\end{equation}
where $x^*$ is a solution of \eqref{opt-fg}.
\end{theorem}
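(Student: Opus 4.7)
My plan is to transplant the Beck--Teboulle $O(1/n)$ argument for ISTA to the continuous-time setting, using a Lyapunov function paired with a monotonicity property. Write $F:=f+g$, set $y(t):=x(t)+\dot x(t)$ (so that $y(t)=\prox_{\gamma f}(x(t)-\gamma\nabla g(x(t)))$), and introduce
\[
\psi(t)\;:=\;F(y(t))-F(x^*)+\frac{1}{2\gamma}\|\dot x(t)\|^2 .
\]
The left-hand inequality $0\le\psi(T)$ is immediate since $x^*$ minimises $F$. For the right-hand one I aim to prove (i) an integrated bound $\int_0^T\psi(t)\,dt\le\frac{1}{2\gamma}\|x_0-x^*\|^2$ and (ii) that $\psi$ is non-increasing; combining these yields $T\psi(T)\le\int_0^T\psi(t)\,dt\le\frac{1}{2\gamma}\|x_0-x^*\|^2$, which is the claimed rate.

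For step (i) I would adapt Beck--Teboulle's key lemma. The prox optimality yields the specific subgradient $v(t):=-\nabla g(x(t))-\dot x(t)/\gamma\in\partial f(y(t))$. Combining (a) the $f$-subgradient inequality tested against $x^*$, (b) the descent lemma $g(y(t))\le g(x(t))+\langle\nabla g(x(t)),\dot x(t)\rangle+\frac{1}{2\beta}\|\dot x(t)\|^2$, and (c) $g$-convexity between $x(t)$ and $x^*$, together with the identity $\langle\dot x(t),x(t)-x^*\rangle=\frac{1}{2}\frac{d}{dt}\|x(t)-x^*\|^2$, I arrive at
\[
\psi(t)+\frac{\beta-\gamma}{2\gamma\beta}\|\dot x(t)\|^2 \;\le\; -\frac{1}{2\gamma}\frac{d}{dt}\|x(t)-x^*\|^2 .
\]
The standing hypothesis implies $\gamma\le\beta$, so the extra $\|\dot x(t)\|^2$ term on the left is non-negative; integrating over $[0,T]$ and discarding $-\frac{1}{2\gamma}\|x(T)-x^*\|^2\le 0$ then gives (i).

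Step (ii) is the harder one and is where the quantitative hypothesis enters. The key tool is a chain rule for $f\circ y$ using the prox-selected subgradient $v(t)$: comparing $f(y(t))+\frac{1}{2\gamma}\|y(t)-z(t)\|^2\le f(y(s))+\frac{1}{2\gamma}\|y(s)-z(t)\|^2$, where $z(\cdot):=x(\cdot)-\gamma\nabla g(x(\cdot))$, with the twin inequality obtained by swapping $s$ and $t$ sandwiches $(f\circ y)'(s)$ both above and below by $\langle v(s),\dot y(s)\rangle$, giving $\frac{d}{dt}f(y(t))=\langle v(t),\dot y(t)\rangle$ a.e. Together with the smooth chain rule for $g\circ y$ and the cancellation of the $\langle\dot x,\ddot x\rangle$-terms (which happens because $\dot y=\dot x+\ddot x$), this produces
\[
\dot\psi(t)\;=\;\langle \nabla g(y(t))-\nabla g(x(t)),\,\dot y(t)\rangle - \frac{1}{\gamma}\|\dot x(t)\|^2 .
\]
Lipschitzness of $\nabla g$ gives $\|\nabla g(y)-\nabla g(x)\|\le\frac{1}{\beta}\|\dot x\|$; the triangle inequality gives $\|\dot y\|\le\|\dot x\|+\|\ddot x\|$; and $\|\ddot x\|$ is controlled by the Lipschitz modulus of $R-\id$ applied to $\dot x=R(x)-x$, where $R:=\prox_{\gamma f}\circ(\id-\gamma\nabla g)$. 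Plugging in these estimates lands on a bound of the form $\dot\psi\le\bigl[\frac{1}{\beta}(3+\frac{\gamma}{\beta})-\frac{1}{\gamma}\bigr]\|\dot x\|^2$, non-positive exactly under $\frac{\gamma}{\beta}(3+\frac{\gamma}{\beta})\le 1$. The main obstacle is precisely this monotonicity step: it requires both the non-smooth chain rule above and a quantitative second-order Lipschitz bound on $R-\id$ that turns $\|\ddot x\|$ into a controlled multiple of $\|\dot x\|$, and it is the fine coefficient coming from the latter bound that determines the exact algebraic form of the hypothesis on $\gamma/\beta$.
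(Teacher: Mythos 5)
Your proposal is correct and follows essentially the same route as the paper: the same energy $\psi(t)=(f+g)(\dot x(t)+x(t))-(f+g)(x^*)+\frac{1}{2\gamma}\|\dot x(t)\|^2$, the same integrated bound obtained from the prox subgradient, the descent lemma and convexity of $g$, and the same monotonicity step via the chain rule for $f$ along $t\mapsto \dot x(t)+x(t)$ combined with the estimates $\|\nabla g(y)-\nabla g(x)\|\le\frac{1}{\beta}\|\dot x\|$ and $\|\ddot x\|\le(2+\frac{\gamma}{\beta})\|\dot x\|$. The only cosmetic difference is that you derive the non-smooth chain rule directly from the prox optimality at two time instants, whereas the paper invokes Br\'ezis' lemma for the same purpose.
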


\begin{proof} From the characterization of the proximal operator we have
\begin{equation}\label{from-def-prox}-\frac{1}{\gamma}\dot x(t)-\nabla g(x(t))\in\partial f(\dot x(t)+x(t)),\end{equation}
The convexity of $f$ yields
\begin{equation}\label{f-conv} f(x^*)\geq f\big(\dot x(t)+x(t)\big)+
\left\langle -\frac{1}{\gamma}\dot x(t)-\nabla g(x(t)), x^*-\dot x(t)-x(t)\right\rangle.
\end{equation}
Further, by using the convexity of $g$ and the Descent Lemma (see \cite[Lemma 1.2.3]{nes}) we derive
\begin{align}g(x^*)\geq & \ g(x(t))+\langle\nabla g(x(t)),x^*-x(t)\rangle\nonumber\\
\geq & \ g\big(\dot x(t)+x(t)\big)-\langle\nabla g(x(t)),\dot x(t)\rangle-\frac{1}{2\beta}\|\dot x(t)\|^2+\langle\nabla g(x(t)),x^*-x(t)\rangle\nonumber\\
= &\label{g} \ g\big(\dot x(t)+x(t)\big)+\langle\nabla g(x(t)),x^*-\dot x(t)-x(t)\rangle-\frac{1}{2\beta}\|\dot x(t)\|^2\end{align}

Combining \eqref{f-conv} and \eqref{g} we obtain
\begin{align}(f+g)(x^*)\geq & \ (f+g)\big(\dot x(t)+x(t)\big)-
\frac{1}{\gamma}\langle \dot x(t), x^*-\dot x(t)-x(t)\rangle-\frac{1}{2\beta}\|\dot x(t)\|^2\nonumber\\
= & \ (f+g)\big(\dot x(t)+x(t)\big)+\frac{2-\frac{\gamma}{\beta}}{2\gamma}\|\dot x(t)\|^2+\frac{1}{2\gamma}\frac{d}{dt}\left(\|x(t)-x^*\|^2\right)\nonumber\\
\geq &\label{ineq-obj1} \ (f+g)\big(\dot x(t)+x(t)\big)+\frac{1}{2\gamma}\|\dot x(t)\|^2+\frac{1}{2\gamma}\frac{d}{dt}\left(\|x(t)-x^*\|^2\right).
\end{align}

Let us fix an arbitrary $T>0$. By integration, we derive from \eqref{ineq-obj1}
\begin{equation}\label{ineq-obj2} \frac{1}{2\gamma}\|x(T)-x^*\|^2+
\int_0^T\left[(f+g)\big(\dot x(t)+x(t)\big)-(f+g)(x^*)+\frac{1}{2\gamma}\|\dot x(t)\|^2\right]dt\leq  \frac{1}{2\gamma}\|x_0-x^*\|^2.
\end{equation}

We notice that due to \eqref{dyn-syst-fb-opt}, we have $\dot x(t) = M\big(x(t)\big)$ where $M:{\cal H}\rightarrow{\cal H}$, $M=\prox_{\gamma f}\circ(\id-\gamma\nabla g)-\id $ is a $(2+\frac{\gamma}{\beta})$-Lipschitz continuous operator. Hence, $\dot x$ is locally absolutely
continuous and (see Remark \ref{rem-abs-cont}(b)) $\ddot x$ exists and for almost every $t\geq 0$ one has
\begin{equation}\label{ddot-exist}\|\ddot x(t)\|\leq\left(2+\frac{\gamma}{\beta}\right)\|\dot x(t)\|.\end{equation}

Due to the continuity properties on $[0,T]$ of the trajectory, one has
$$\dot x+x\in L^2([0,T];{\cal H}),$$
$$-\frac{1}{\gamma}\dot x-\nabla g(x)\in L^2([0,T];{\cal H})$$
$$\ddot x+\dot x\in L^2([0,T];{\cal H}).$$
From \eqref{from-def-prox} and \cite[Lemme 4, p. 73]{brezis} (see also \cite[Lemma 3.2]{att-cza-10}) we obtain that the function $t\mapsto f\big(\dot x(t)+x(t)\big)$ is absolutely continuous
and $$\frac{d}{dt}f\big(\dot x(t)+x(t)\big)=\left\langle -\frac{1}{\gamma}\dot x(t)-\nabla g(x(t)),\ddot x(t)+\dot x(t)\right\rangle.$$
Moreover, it holds
$$\frac{d}{dt}g\big(\dot x(t)+x(t)\big)=\left\langle \nabla g\big(\dot x(t)+x(t)\big),\ddot x(t)+\dot x(t)\right\rangle.$$

Summing up the last two equalities we derive
\begin{align}\frac{d}{dt}(f+g)\big(\dot x(t)+x(t)\big) = & \left\langle -\frac{1}{\gamma}\dot x(t)-\nabla g(x(t))+\nabla g\big(\dot x(t)+x(t)\big),
\ddot x(t)+\dot x(t)\right\rangle\nonumber\\
 = & -\frac{1}{2\gamma}\frac{d}{dt}\big(\|\dot x(t)\|^2\big)-\frac{1}{\gamma}\|\dot x(t)\|^2\nonumber\\
 & +\left\langle \nabla g\big(\dot x(t)+x(t)\big)-\nabla g(x(t)),
\ddot x(t)+\dot x(t)\right\rangle\nonumber\\
\label{lip-g}\leq & -\frac{1}{2\gamma}\frac{d}{dt}\big(\|\dot x(t)\|^2\big)-\frac{1}{\gamma}\|\dot x(t)\|^2
 + \frac{1}{\beta}\|\dot x(t)\|\cdot\|\ddot x(t)+\dot x(t)\|\\
 \label{ddot}\leq & -\frac{1}{2\gamma}\frac{d}{dt}\big(\|\dot x(t)\|^2\big)-\frac{1}{\gamma}\|\dot x(t)\|^2
 + \frac{1}{\beta}(3+\frac{\gamma}{\beta})\|\dot x(t)\|^2\\
 = & -\frac{1}{2\gamma}\frac{d}{dt}\big(\|\dot x(t)\|^2\big)-\left[\frac{1}{\gamma}-\frac{1}{\beta}(3+\frac{\gamma}{\beta})\right]\|\dot x(t)\|^2,\nonumber
\end{align}
where in \eqref{lip-g} we used the Lipschitz continuity of $\nabla g$ and in \eqref{ddot} the inequality \eqref{ddot-exist}.
Altogether, we conclude that for almost every $t\geq 0$ we have
\begin{equation}\label{decr-f-conv} \frac{d}{dt}\left[(f+g)\big(\dot x(t)+x(t)\big)+\frac{1}{2\gamma}\|\dot x(t)\|^2\right]+
\left[\frac{1}{\gamma}-\frac{1}{\beta}(3+\frac{\gamma}{\beta})\right]\|\dot x(t)\|^2\leq 0.
\end{equation}
Due to the condition imposed on the step size, we get that $$\frac{d}{dt}\left[(f+g)\big(\dot x(t)+x(t)\big)+\frac{1}{2\gamma}\|\dot x(t)\|^2\right]\leq 0,$$
hence the function $t\mapsto (f+g)\big(\dot x(t)+x(t)\big)-(f+g)(x^*)+\frac{1}{2\gamma}\|\dot x(t)\|^2$ is nonincreasing. This together
with \eqref{ineq-obj2} yields the inequality
\begin{equation*}\frac{1}{2\gamma}\|x(T)-x^*\|^2+T\left[(f+g)\big(\dot x(T)+x(T)\big)-(f+g)(x^*)+\frac{1}{2\gamma}\|\dot x(T)\|^2\right]
\leq \frac{1}{2\gamma }\|x_0-x^*\|^2
\end{equation*}
and the proof is complete.
\end{proof}

\begin{remark} Let us mention now a recent contribution related to structured optimization problems.
For ${\mathcal H}$ and ${\mathcal G}$ real Hilbert spaces, we consider the convex minimization problem
\begin{equation}\label{primal}
\min_{x\in{\mathcal H}}f(x)+h(x)+g(Ax),
\end{equation}
where $f:{\mathcal H}\To\R \cup \{+\infty\}$ and $g:{\mathcal G}\To \R \cup \{+\infty\}$ are proper, convex and lower semicontinuous functions, $h:\mathcal{H}\To\R$ is a convex and Fr\'echet differentiable function with Lipschitz continuous gradient and $A:{\mathcal H}\To{\mathcal G}$ is a continuous linear operator.

Due to the presence of the composition with a linear operator, dual variables appear both when writing the Fenchel dual problem and also in primal-dual numerical schemes existing in the literature for solving highly
structured optimization problems. The following dynamical system introduced in \cite{bcl-dyn-pd} can be seen as
a continuous version of the mentioned numerical schemes for solving \eqref{primal}:
\begin{equation}\label{ADMMdysy-subdiff}
\left\{
\begin{array}{llll}
\dot{x}(t)+x(t)\in\left(\partial f +cA^*A+M_1(t)\right)^{-1}\left(M_1(t)x(t)+cA^*z(t)-A^*y(t)-\nabla h(x(t))\right)\\
\\
\dot{z}(t)+z(t)\in \left(\partial g +cI+M_2(t)\right)^{-1}\left(M_2(t)z(t)+cA(\gamma\dot x(t)+x(t))+y(t)\right)\\
\\
\dot{y}(t)=c A(x(t)+\dot{x}(t))-c (z(t)+\dot{z}(t))\\
\\
x(0)=x^0\in{\mathcal H},\,z(0)=z^0\in{\mathcal G},\,y(0)=y^0 \in{\mathcal G},
\end{array}
\right.
\end{equation}
where $c >0$, $\gamma \in [0,1]$, $A^*:{\mathcal G}\To{\mathcal H}$ is the adjoint operator and $M_1:[0,+\infty)\To S_+(\mathcal{H})$ and $M_2:[0,+\infty)\To S_+(\mathcal{G})$ (which means that for all $t\geq 0$, $M_1(t):\mathcal{H}\to\mathcal{H}$ is linear, continuous, self-adjoint and positive semidefinite).

Let us stress the fact that the analysis of the dynamical system \eqref{ADMMdysy-subdiff} is involved.
It needs more technical results in order to show that the system is well-posed. Moreover, the asymptotic  analysis requires deep machinery in order to derive an appropriate energy functional. We invite the
reader to consult \cite{bcl-dyn-pd} for all these details, just mentioning that under appropriate hypotheses,
$(x(t),z(t),y(t))$ converges weakly to a saddle point of the Lagrangian $l$, defined as
$l:{\mathcal H}\times{\mathcal G}\times{\mathcal G}\To\oR,\,l(x,z,y)=f(x)+h(x)+g(z)+\<y,Ax-z\>.$

There are not too many works in the literature devoted to the solving of optimization problems involving
compositions with linear operators by means of continuous in time dynamics. Let us also mention a recent contribution of Attouch \cite{att}, where a different dynamical system
is proposed in \cite[Section 2.2]{att} for solving block-structured optimization problems with linear
constraint. We mention also \cite{bitt-c-w}, where a dynamical system attached to a more involved optimization problem is investigated and which is a continuous counterpart of the proximal alternating minimization algorithm AMA (see also the work of Tseng on AMA \cite{tseng1991}).  

The dynamics \eqref{ADMMdysy-subdiff} provides through explicit time discretization a numerical algorithm which is a combination of the linearized proximal method of multipliers and the proximal ADMM algorithm.

Indeed, by writing the inclusion in an equivalent form and using the explicit discretization of  with respect to the time variable $t$ and constant step $h_k\equiv 1$ yields the iterative scheme (see \cite[Remark 1]{bcl-dyn-pd})
\begin{equation}\label{ADMMdiscrete}
\left\{
\begin{array}{llll}
x_{n+1}\in\argmin\limits_{x\in\mathcal{H}}\left(f(x)+\<x-x_n,\nabla h(x_n)\>+\frac{c}{2}\left\|Ax-z_n+\frac{y_n}{c}\right\|^2+\frac12\|x-x_n\|^2_{M_1^n}\right)\\
\\
z_{n+1}\in \argmin\limits_{z\in \mathcal{G}}\left(g(z)+\frac{c}{2}\left\|A(\gamma x_{n+1}+(1-\gamma)x_n)-z+\frac{y_n}{c}\right\|^2+\frac12\|z-z_n\|^2_{M_2^n}\right)\\
\\
y_{n+1}=y_n+c(Ax_{n+1}-z_{n+1})
\end{array}
\right.
\end{equation}
where $(M_1^n)_{n \geq 0}$ and $(M_2^n)_{n \geq 0}$ are two operator sequences in $S_+(\mathcal{H})$ and $S_+(\mathcal{G})$, respectively.

The algorithm \eqref{ADMMdiscrete} is a combination of the linearized proximal method of multipliers and the proximal ADMM algorithm.

Indeed, in the case when $\gamma =1$, \eqref{ADMMdiscrete} becomes the proximal ADMM algorithm with variable metrics from \cite{Ban-Bot-Csetnek} (see, also, \cite{b-c-acm}). If, in addition, $h=0$ and the operator sequences $(M_1^k)_{k \geq 0}$ and $(M_2^k)_{k \geq 0}$ are constant, then \eqref{ADMMdiscrete} becomes the proximal ADMM algorithm investigated in \cite[Section 3.2]{s-teb} (see, also, \cite{fpst}). It is known that the proximal ADMM algorithm can be seen as a generalization of the full splitting primal-dual algorithms of Chambolle-Pock (see \cite{ch-pck}) and Condat-Vu (see \cite{condat2013, vu}).

On the other hand, in the case when $\gamma =0$, \eqref{ADMMdiscrete} becomes an extension of the  linearized proximal method of multipliers of Chen-Teboulle (see \cite{chen-teb}, \cite[Algorithm 1]{s-teb}).

A particular choice for the linear maps $M_1$ and $M_2$ transforms \eqref{ADMMdysy-subdiff} into a dynamical system of primal-dual type formulated in the spirit of the full splitting paradigm.
For every $t \in [0,+\infty)$, define $$M_1(t)=\frac{1}{\tau(t)}I-cA^*A \ \mbox{and} \ M_2(t)=0,$$
where $\tau(t)>0$ is such that $c\tau(t)\|A\|^2\le1$.

In this particular setting, the dynamical system \eqref{ADMMdysy-subdiff} can be equivalently written as (see \cite[Remark 2]{bcl-dyn-pd})
\begin{equation}\label{ADMMdysyM1}
\left\{
\begin{array}{llll}
\dot{x}(t)+x(t)=\prox\nolimits_{\tau(t)f}\big((I-c\tau(t)A^*A)x(t)+c\tau(t) A^*z(t)-\tau(t)A^*y(t)-\tau(t)\nabla h(x(t))\big)\\
\\
\dot{y}(t)+y(t)+c(\gamma-1)A\dot{x}(t)=\prox\nolimits_{cg^*}\big(cA(\gamma\dot{x}(t)+x(t))+y(t)\big)\\
\\
\dot{y}(t)=c A(x(t)+\dot{x}(t))-c (z(t)+\dot{z}(t))\\
\\
x(0)=x^0\in{\mathcal H},\,z(0)=z^0\in{\mathcal G},\,y(0)=y^0 \in{\mathcal G},
\end{array}
\right.
\end{equation}
where $g^*:\mathcal G\to\R\cup\{+\infty\}$ is the Fenchel conjugate of the proper, convex and lower semicontinuous 
function $g$. 
Let us also mention that when $h=0$ and $\gamma =1$ the discretization of the dynamical system \eqref{ADMMdysyM1} leads to the primal-dual algorithm proposed by Chambolle and Pock in \cite{ch-pck}.

\end{remark}

\section{Second order dynamical systems}

As in the previous section, we start with a general dynamical system which will be considered then in several special
instances, including the link to optimization problems as well:

\begin{equation}\label{dyn-syst}\left\{
\begin{array}{ll}
\ddot x(t) + \gamma(t)\dot x(t) + \lambda(t)B(x(t))=0\\
x(0)=u_0, \dot x(0)=v_0.
\end{array}\right.\end{equation}
where $u_0,v_0\in {\cal H}$, $B: {\cal H}\rightarrow{\cal H}$ is a $\beta$-cocoercive operator,
$\lambda:[0,+\infty)\rightarrow [0,+\infty)$ is a relaxation function in time and $\gamma:[0,+\infty)\rightarrow [0,+\infty)$ is a continuous damping parameter.

The existence and uniqueness of the trajectory follows (under appropriate conditions imposed on the parameters) from the Cauchy-Lipschitz-Picard Theorem by writing \eqref{dyn-syst} as a first order dynamical system in a product space
(see \cite[Theorem 4]{b-c-sicon2016}).

In order to prove the convergence of the trajectories of \eqref{dyn-syst}, we make the following assumptions on the relaxation function $\lambda$ and
the damping parameter $\gamma$, respectively:
\begin{enumerate}
\item[{\rm (A1)}] $\lambda, \gamma :[0,+\infty)\rightarrow (0,+\infty)$ are locally absolutely continuous and there exists $\theta >0$ such that for almost every $t\in [0, +\infty)$ we have
\begin{equation}\label{h3-g}\dot\gamma(t)\leq 0\leq\dot\lambda(t) \mbox{ and } \frac{\gamma^2(t)}{\lambda(t)}\geq\frac{1+\theta}{\beta}.\end{equation}
\end{enumerate}

Due to Definition \ref{abs-cont} and Remark \ref{rem-abs-cont}(a), $\dot\lambda(t),\dot\gamma(t)$ exists for almost every $t\geq 0$ and
$\dot\lambda,\dot\gamma$ are Lebesgue integrable on each interval $[0,b]$ for $0<b<+\infty$. This combined with
$\dot\gamma(t)\leq 0\leq\dot\lambda(t)$ and the fact that $\lambda,\gamma$ take only positive values yield
the existence of a positive lower bound $\ul\lambda$ for $\lambda$ and of a positive upper bound $\ol\gamma$
for $\gamma$. Furthermore, the second assumption in \eqref{h3-g} provides also a positive upper bound $\ol\lambda$ for $\lambda$ and a positive
lower bound $\ul\gamma$ for $\gamma$.

We would also like to point out that under the conditions considered in (A1) the global version of the Cauchy-Lipschitz-Picard Theorem allows us
to conclude that, for $u_0,v_0\in {\cal H}$, there exists a unique trajectory $x:[0,+\infty)\rightarrow{\cal H}$ which is a
$C^2$-function and which satisfies the first relation in \eqref{dyn-syst} for every $t\in [0,+\infty)$. The considerations we make in the following take into account this fact.

Let us also mention that in case $\gamma(t)=\gamma$ and $\lambda(t)=\lambda$ for every $t\in [0,+\infty)$, where
$\gamma,\lambda>0$, the assumption $(A1)$ becomes $\gamma^2\beta > \lambda$, a condition which has been used
in \cite{att-maing} in relation with the study of the asymptotical convergence of the dynamical system \eqref{dyn-syst}.

We state now the convergence result.
\begin{theorem}\label{conv-th}  Let $B: {\cal H}\rightarrow{\cal H}$ be a $\beta$-cocoercive operator for $\beta>0$ such that
$\zer B:=\{u\in {\cal H}:Bu=0\}\neq\emptyset$, $\lambda,\gamma:[0,+\infty)\rightarrow(0,+\infty)$ be functions fulfilling {\rm (A1)} and
$u_0,v_0\in {\cal H}$. Let $x:[0,+\infty)\rightarrow {\cal H}$ be the unique strong global solution of \eqref{dyn-syst}.
Then the following statements are true:

(i) the trajectory $x$ is bounded and $\dot x,\ddot x,Bx\in L^2([0,+\infty); {\cal H})$;

(ii) $\lim_{t\rightarrow+\infty}\dot x(t)=\lim_{t\rightarrow+\infty}\ddot x(t)=\lim_{t\rightarrow+\infty} B(x(t)=0$;

(iii) $x(t)$ converges weakly to an element in $\zer B$ as $t\rightarrow+\infty$.
\end{theorem}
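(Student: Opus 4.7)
The plan is Lyapunov analysis combined with the continuous Opial lemma. Fix $y\in\zer B$ and set $h(t)=\tfrac{1}{2}\|x(t)-y\|^2$. Using the ODE $\ddot x=-\gamma\dot x-\lambda B(x)$ together with $By=0$ and the cocoercivity inequality $\langle B(x(t)),x(t)-y\rangle\ge\beta\|B(x(t))\|^2$, a direct computation of $\ddot h$ yields the basic differential inequality
\begin{equation}\label{eq:plan1}
\ddot h(t)+\gamma(t)\dot h(t)+\lambda(t)\beta\|B(x(t))\|^2\le\|\dot x(t)\|^2.
\end{equation}
A second identity comes from taking the inner product of the ODE with $\dot x$, giving $\tfrac{d}{dt}\bigl(\tfrac12\|\dot x\|^2\bigr)+\gamma\|\dot x\|^2=-\lambda\langle B(x),\dot x\rangle$, to which Young's inequality will be applied to split the cross term into pieces controlled by $\|\dot x\|^2$ and $\|B(x)\|^2$.

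The heart of the proof, and the main obstacle, is to combine these two bounds into a single Lyapunov energy of the form $\mathcal{E}(t)=\tfrac12\|\dot x(t)\|^2+\alpha\dot h(t)+\alpha\gamma(t)h(t)$, with a positive constant $\alpha$ depending only on $\theta,\beta$ and the uniform bounds $\ul\gamma,\ol\gamma,\ul\lambda,\ol\lambda$ (whose existence is asserted in the paragraph after (A1)), such that $\mathcal{E}$ is bounded below and $\dot{\mathcal{E}}(t)+c_1\|\dot x(t)\|^2+c_2\lambda(t)\|B(x(t))\|^2\le 0$ for constants $c_1,c_2>0$. The quantitative condition $\gamma^2/\lambda\ge(1+\theta)/\beta$ in (A1) is precisely what supplies the $\theta$-margin needed to absorb the $\|\dot x\|^2$ on the right of \eqref{eq:plan1} after Young's inequality while still leaving strictly negative coefficients in $\dot{\mathcal{E}}$, and the sign condition $\dot\gamma\le 0$ (together with $h\ge 0$) handles the $-\alpha\dot\gamma h$ term generated by differentiating the coefficient of $h$. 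Lower boundedness of $\mathcal{E}$ is obtained by dominating $|\alpha\dot h|$ via weighted Young's inequality applied to $\alpha|\langle\dot x,x-y\rangle|$. Monotonicity of $\mathcal{E}$ then gives boundedness of $x$, and integration delivers $\dot x,\sqrt{\lambda}\,B(x)\in L^2([0,+\infty);{\cal H})$; reading the ODE yields $\ddot x\in L^2$, proving (i).

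For (ii), I apply Lemma \ref{fejer-cont2}. Cocoercivity forces $B$ to be $(1/\beta)$-Lipschitz, so $t\mapsto\|B(x(t))\|^2$ is locally absolutely continuous with derivative bounded pointwise by $(1/\beta)(\|B(x(t))\|^2+\|\dot x(t)\|^2)\in L^1$ by (i); combined with $\|B(x)\|^2\in L^1$ (using $\lambda\ge\ul\lambda>0$), Lemma \ref{fejer-cont2} yields $\lim_{t\to+\infty}B(x(t))=0$. The same lemma applied to $\|\dot x\|^2$, whose derivative $-2\gamma\|\dot x\|^2-2\lambda\langle B(x),\dot x\rangle$ lies in $L^1$ thanks to (i) and the uniform bounds on $\gamma,\lambda$, gives $\lim\dot x(t)=0$; the ODE then forces $\lim\ddot x(t)=0$.

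Claim (iii) follows from Opial's lemma (Lemma \ref{opial}) with $S=\zer B$. For the first Opial hypothesis, I apply Lemma \ref{fejer-cont1} to the auxiliary function $\varphi(t)=\dot h(t)+\gamma(t)h(t)$: using \eqref{eq:plan1} and $\dot\gamma\le 0$, $h\ge 0$, one has $\dot\varphi=\ddot h+\dot\gamma h+\gamma\dot h\le\|\dot x\|^2\in L^1$, and $\varphi$ is bounded below since $h\ge 0$ and $|\dot h|\le\|\dot x\|\,\|x-y\|$ is bounded by (i). Hence $\lim\varphi$ exists in $\R$; since $\dot h(t)\to 0$ (from (ii) and boundedness of $x-y$) and $\gamma$ is non-increasing and bounded below by $\ul\gamma>0$, hence converges to some $\gamma_\infty>0$, this gives existence of $\lim h(t)$. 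For the second Opial hypothesis, any weak cluster point $\bar x$ of $x(t)$ satisfies $B(\bar x)=0$ by (ii) and the sequential weak-strong closure of the graph of the maximally monotone operator $B$. Opial's lemma then produces the weak convergence of $x(t)$ to a point in $\zer B$.
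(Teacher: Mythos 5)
Your argument is correct, and parts (ii)--(iii) coincide with the paper's proof (Lemma \ref{fejer-cont2} applied to $\|\dot x\|^2$ and $\|B(x)\|^2$, then Opial with the weak--strong closedness of $\gr B$). The heart of part (i), however, is closed differently. The paper never forms the inner product of the equation with $\dot x$ nor uses Young's inequality on $\lambda\langle B(x),\dot x\rangle$: starting from your inequality $\ddot h+\gamma\dot h+\beta\lambda\|B(x)\|^2\le\|\dot x\|^2$, it substitutes $\lambda(t)B(x(t))=-\ddot x(t)-\gamma(t)\dot x(t)$ back into the cocoercivity term, expands $\frac{\beta}{\lambda}\|\ddot x+\gamma\dot x\|^2$, and after rewriting $\frac{\gamma}{\lambda}\frac{d}{dt}\|\dot x\|^2$ and $\gamma\dot h$ as total derivatives plus sign-controlled remainders arrives at the decreasing quantity $\dot h+\gamma(t)h+\beta\frac{\gamma(t)}{\lambda(t)}\|\dot x\|^2$, with the pointwise condition $\frac{\gamma^2(t)}{\lambda(t)}\ge\frac{1+\theta}{\beta}$ used exactly where it stands; this also yields $\ddot x\in L^2$ directly from the integrated inequality. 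Your route --- the functional $\mathcal{E}=\frac12\|\dot x\|^2+\alpha\dot h+\alpha\gamma h$ with a \emph{constant} weight $\alpha$ and a weighted Young split of the cross term --- does go through, but only because it reduces to the uniform requirement $\ul\gamma^2\ge(1+\theta)\ol\lambda/\beta$, which here happens to follow from (A1) since $\dot\gamma\le0\le\dot\lambda$ makes $t\mapsto\gamma^2(t)/\lambda(t)$ nonincreasing so that its pointwise lower bound passes to $\ul\gamma^2/\ol\lambda$. You should make this reduction explicit (it is the one step you assert rather than verify), and be aware that your constant-$\alpha$ construction is genuinely less robust than the paper's time-varying weight $\beta\gamma(t)/\lambda(t)$: it would not survive if only the pointwise ratio condition were assumed without the monotonicity of $\gamma$ and $\lambda$. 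What your version buys in exchange is a more standard, mechanical energy computation that avoids the slightly delicate total-derivative manipulations \eqref{d-g-h-x}--\eqref{d-g-h}.
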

\begin{proof} (i) Take an arbitrary $x^*\in\zer B$ and consider for every $t \in [0, +\infty)$ the function $h(t)=\frac{1}{2}\|x(t)-x^*\|^2$. We have $\dot h(t)=\langle x(t)-x^*,\dot x(t)\rangle$ and
$\ddot h(t)=\|\dot x(t)\|^2+ \<x(t)-x^* , \ddot x(t)\>$ for every $t \in [0,+\infty)$. Taking into account \eqref{dyn-syst}, we get for every $t \in [0,+\infty)$
\begin{equation}\label{eq-h}\ddot h(t) + \gamma(t)\dot h(t) + \lambda(t)\<x(t)-x^* , B(x(t))\> = \|\dot x(t)\|^2.\end{equation}

The cocoercivity of $B$ and the fact that $Bx^*=0$ yields for every $t \in [0, +\infty)$
$$\ddot h(t) + \gamma(t)\dot h(t) + \beta\lambda(t)\|B(x(t))\|^2  \leq \|\dot x(t)\|^2.$$
Taking again into account \eqref{dyn-syst} one obtains for every $t \in [0, +\infty)$
$$\ddot h(t) + \gamma(t)\dot h(t) + \frac{\beta}{\lambda(t)}\|\ddot x(t)+\gamma(t)\dot x(t)\|^2  \leq \|\dot x(t)\|^2$$
or, equivalently,
\begin{equation*}\ddot h(t) + \gamma(t)\dot h(t)  +
\frac{\beta\gamma(t)}{\lambda(t)}\frac{d}{dt}\big(\|\dot x(t)\|^2\big)  + \left(\frac{\beta\gamma^2(t)}{\lambda(t)}-1\right)||\dot x(t)||^2 +
\frac{\beta}{\lambda(t)}||\ddot x(t)||^2  \leq 0.\end{equation*}
Combining this inequality with
\begin{equation}\label{d-g-h-x}\frac{\gamma(t)}{\lambda(t)}\frac{d}{dt}\big(\|\dot x(t)\|^2\big)  =  \
\frac{d}{dt}\left(\frac{\gamma(t)}{\lambda(t)}\|\dot x(t)\|^2\right)-
\frac{\dot \gamma(t)\lambda(t)-\gamma(t)\dot\lambda(t)}{\lambda^2(t)}\|\dot x(t)\|^2\\
\end{equation}
and \begin{equation}\label{d-g-h}\gamma(t)\dot h(t)=\frac{d}{dt}(\gamma h)(t)-\dot\gamma(t)h(t)\geq \frac{d}{dt}(\gamma h)(t),\end{equation}
it yields for every $t \in [0,+\infty)$
\begin{align*}
& \ddot h(t)  + \frac{d}{dt}(\gamma h)(t) +\\
& \beta\frac{d}{dt}\left(\frac{\gamma(t)}{\lambda(t)}\|\dot x(t)\|^2\right)  + \left(\frac{\beta\gamma^2(t)}{\lambda(t)}+
\beta\frac{-\dot \gamma(t)\lambda(t)+\gamma(t)\dot\lambda(t)}{\lambda^2(t)}-1\right)||\dot x(t)||^2 +
\frac{\beta}{\lambda(t)}||\ddot x(t)||^2  \leq 0.\end{align*}
Now, assumption (A1) delivers for almost every $t \in [0, +\infty)$ the inequality
\begin{equation}\label{ineq-h-renorm-fin}\ddot h(t)  + \frac{d}{dt}(\gamma h)(t) +
\beta\frac{d}{dt}\left(\frac{\gamma(t)}{\lambda(t)}\|\dot x(t)\|^2\right)  + \theta||\dot x(t)||^2 +
\beta\ol\lambda^{-1}\|\ddot x(t)||^2  \leq 0.\end{equation}
This implies that the function $t\mapsto \dot h(t)+\gamma(t) h(t)+\beta\frac{\gamma(t)}{\lambda(t)}\|\dot x(t)\|^2$, which is locally absolutely
continuous, is monotonically decreasing. Hence there exists a real number $M$ such that for every $t \in [0, +\infty)$
\begin{equation}\label{ineq-h-bound} \dot h(t)+\gamma(t) h(t)+\beta\frac{\gamma(t)}{\lambda(t)}\|\dot x(t)\|^2\leq M,
\end{equation}
which yields that for every $t \in [0, +\infty)$
$$\dot h(t)+\ul\gamma h(t)\leq M.$$
By multiplying this inequality with $\exp(\ul\gamma t)$ and then integrating from $0$ to $T$, where $T > 0$, one easily obtains
$$h(T)\leq h(0)\exp(-\ul\gamma T)+\frac{M}{\ul\gamma}(1-\exp(-\ul\gamma T)),$$
thus
\begin{equation}\label{h-bound} h \mbox{ is bounded}\end{equation}
and, consequently,
\begin{equation}\label{x-bound} \mbox{the trajectory }x \mbox{ is bounded}.\end{equation}

On the other hand, from \eqref{ineq-h-bound}, it follows that for every $t \in [0, +\infty)$
$$ \dot h(t)+\beta\ul\gamma\ol\lambda^{-1}\|\dot x(t)\|^2\leq M,$$
hence
$$ \<x(t)-x^*,\dot x(t)\>+\beta\ul\gamma\ol\lambda^{-1}\|\dot x(t)\|^2\leq M.$$
This inequality in combination with \eqref{x-bound} yields
\begin{equation}\label{dotx-bound} \dot x \mbox{ is bounded},\end{equation}
which further implies that
\begin{equation}\label{doth-bound} \dot h \mbox{ is bounded}.\end{equation}

Integrating the inequality \eqref{ineq-h-renorm-fin} we obtain that there exists a real number $N \in \R$ such that for every $t \in [0, +\infty)$
$$\dot h(t)+\gamma(t) h(t)+\beta\frac{\gamma(t)}{\lambda(t)}\|\dot x(t)\|^2+\theta\int_0^t||\dot x(s)||^2ds
+\beta\ol\lambda^{-1}\int_0^t||\ddot x(s)||^2ds\leq N.$$
From here, via \eqref{doth-bound}, we conclude that $\dot x(\cdot),\ddot x(\cdot)\in L^2([0,+\infty); {\cal H})$.
Finally, from \eqref{dyn-syst} and (A1) we deduce $Bx \in L^2([0,+\infty); {\cal H})$ and the proof of (i) is complete.

(ii) For every $t \in [0, +\infty)$ it holds
$$\frac{d}{dt}\left(\frac{1}{2}\|\dot x(t)\|^2\right)=\<\dot x(t),\ddot x(t)\>\leq \frac{1}{2}\|\dot x(t)\|^2+\frac{1}{2}\|\ddot x(t)\|^2$$
and Lemma \ref{fejer-cont2} together with (i) lead to $\lim_{t\rightarrow+\infty}\dot x(t)=0$.

Further, by taking into consideration Remark \ref{rem-abs-cont}(b), for every $t \in [0, +\infty)$ we have
$$\frac{d}{dt}\left(\frac{1}{2}\|B(x(t))\|^2\right)=\<B(x(t)),\frac{d}{dt}(Bx(t))\>\leq \frac{1}{2}\|B(x(t))\|^2+\frac{1}{2\beta^2}\|\dot x(t)\|^2.$$
By using again Lemma \ref{fejer-cont2} and (i) we get
$\lim_{t\rightarrow+\infty}B(x(t))=0$, while the fact that $\lim_{t\rightarrow+\infty}\ddot x(t)=0$ follows from
\eqref{dyn-syst} and (A2).

(iii) We are going to prove that both assumptions in Opial Lemma are fulfilled. The first one concerns the existence
of $\lim_{t\rightarrow +\infty }\|x(t)-x^*\|$. As seen in the proof of part (i), the function
$t\mapsto \dot h(t)+\gamma(t) h(t)+\beta\frac{\gamma(t)}{\lambda(t)}\|\dot x(t)\|^2$ is
monotonically decreasing, thus from (i), (ii) and (A1) we deduce that
$\lim_{t\rightarrow+\infty} \gamma(t) h(t)$ exists and it is a real number.
By taking also into account that $\exists \lim_{t\rightarrow+\infty}\gamma(t)\in(0,\infty)$, we obtain
the existence of $\lim_{t\rightarrow +\infty }\|x(t)-x^*\|$.

We come now to the second assumption of the Opial Lemma. Let $\ol x$ be a weak sequential cluster point of $x$, that is, there exists
a sequence $t_n\rightarrow+\infty$ (as $n\rightarrow+\infty$) such that $(x(t_n))_{n\in\N}$ converges weakly to $\ol x$. Since $B$ is a maximally monotone operator (see for instance \cite[Example 20.28]{bauschke-book}), its graph is sequentially closed with respect to
the weak-strong topology of the product space ${\cal H}\times {\cal H}$. By using also that $\lim_{n\rightarrow+\infty}B(x({t_n}))=0$, we conclude
that $B\ol x=0$, hence $\ol x\in\zer B$ and the proof is complete.
\end{proof}

A standard choice of a cocoercive operator defined on a real Hilbert spaces is $B=\id-T$, where $T: {\cal H}\rightarrow {\cal H}$ is a nonexpansive operator. As it easily follows from the nonexpansiveness of $T$, $B$ is in this case $1/2$-cocoercive. For this particular operator $B$ the dynamical system \eqref{dyn-syst} becomes
\begin{equation}\label{dyn-syst-nonexp}\left\{
\begin{array}{ll}
\ddot x(t) + \gamma(t) \dot x(t) + \lambda(t)\big(x(t)-T(x(t))\big)=0\\
x(0)=u_0, \dot x(0)=v_0.
\end{array}\right.\end{equation}

Theorem \ref{conv-th} gives rise to a corresponding result where the trajectory converges weakly to a fixed point
of $T$, where in (A1) we use the condition $\frac{\gamma^2(t)}{\lambda(t)}\geq 2(1+\theta)$.

\begin{remark}\label{alv-att-nonexp} (i) The analysis can be further extended to the situation when $T$ in \eqref{dyn-syst-nonexp} is an $\alpha$-averaged operator (with $0<\alpha<1$). In this case  we use in (A1) the condition $\frac{\gamma^2(t)}{\lambda(t)}\geq 2\alpha(1+\theta)$.

(ii) In the particular case when $\gamma(t)=\gamma > 0$ for all $t\geq 0$ and
$\lambda(t)=1$ for all $t \in [0, +\infty)$ the dynamical system \eqref{dyn-syst-nonexp} has been studied in \cite[Theorem 3.2]{att-alv}
under the condition $\gamma^2>2$, which is covered by the theory presented above. We would also like to notice that in \cite{alvarez2000} an anisotropic damping has
been considered in the context of approaching the minimization of a smooth convex function via second order dynamical systems.
\end{remark}

Let us turn now our attention to monotone inclusions and consider the dynamics:
\begin{equation}\label{dyn-syst-fb-mon}\left\{
\begin{array}{ll}
\ddot x(t) + \gamma(t)\dot x(t) + \lambda(t)\left[x(t)-J_{\eta A}\Big(x(t)-\eta B(x(t))\Big)\right]=0\\
x(0)=u_0, \dot x(0)=v_0.
\end{array}\right.\end{equation}
Further, in (A1) we use $ \frac{\gamma^2(t)}{\lambda(t)}\geq \frac{2 (1+\theta)}{\delta}$, where $\delta:=\frac{4\beta-\eta}{2\beta}$.

The statements (i)-(iii) in the next result follow from Remark \ref{alv-att-nonexp}(ii) by taking into account that
under the hypotheses, $T=J_{\eta A}\circ(\id -\eta B)$ is $1/\delta$-averaged, see also the comments before
Theorem \ref{fb-dyn}. For (iv) and (v) we invite the reader to consult \cite[Theorem 12]{b-c-sicon2016}.
Moreover, under appropriate hypotheses, the rate of convergence in (v) is exponential, see \cite[Theorem 7]{b-c-dyn-conv-rate}.

\begin{theorem}\label{fb-dyn-sec-ord} Let $A:{\cal H}\rightrightarrows {\cal H}$ be a maximally monotone operator and
$B:{\cal H}\rightarrow {\cal H}$ be a $\beta$-cocoercive operator for $\beta > 0$
such that $\zer(A+B)\neq\emptyset$. Let $\eta\in(0,2\beta)$ and set $\delta:=\frac{4\beta-\eta}{2\beta}$. Let
$\lambda,\gamma:[0,+\infty)\rightarrow(0,+\infty)$ be functions fulfilling {\rm (A1)} as mentioned right after  \eqref{dyn-syst-fb-mon}, $u_0,v_0\in {\cal H}$
and $x:[0,+\infty)\rightarrow {\cal H}$ be the unique strong global solution of \eqref{dyn-syst-fb-mon}.
Then the following statements are true:

(i) the trajectory $x$ is bounded and $\dot x,\ddot x,\big(\id -J_{\eta A}\circ(\id -\eta B)\big)x\in L^2([0,+\infty); {\cal H})$;

(ii) $\lim_{t\rightarrow+\infty}\dot x(t)=\lim_{t\rightarrow+\infty}\ddot x(t)=
\lim_{t\rightarrow+\infty}\big(\id -J_{\eta A}\circ(\id -\eta B)\big)(x(t))=0$;

(iii) $x(t)$ converges weakly to a point in $\zer(A+B)$ as $t\rightarrow+\infty$;

(iv) if $x^*\in\zer(A+B)$, then $B(x(\cdot))-Bx^*\in L^2([0,+\infty); {\cal H})$,
$\lim_{t\rightarrow+\infty}B(x(t))=Bx^*$ and $B$ is constant on $\zer(A+B)$;

(v) if $A$ or $B$ is uniformly monotone, then $x(t)$ converges strongly to the unique point in $\zer(A+B)$ as $t\rightarrow+\infty$.
\end{theorem}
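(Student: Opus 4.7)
My plan is to reduce (i)--(iii) to the averaged-operator version of Theorem \ref{conv-th} mentioned in Remark \ref{alv-att-nonexp}, and to derive (iv) and (v) by refining the Lyapunov inequality used there so that it retains a $\|B(x(t))-Bx^*\|^2$ term.

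For (i)--(iii), I would first rewrite \eqref{dyn-syst-fb-mon} as $\ddot x(t)+\gamma(t)\dot x(t)+\lambda(t)(x(t)-T(x(t)))=0$ with $T=J_{\eta A}\circ(\id-\eta B)$. Since $J_{\eta A}$ is $1/2$-averaged and $\id-\eta B$ is $\eta/(2\beta)$-averaged, the composition result \cite[Theorem 3(b)]{og-yam} gives that $T$ is $1/\delta$-averaged with $\delta=(4\beta-\eta)/(2\beta)$. The hypothesis $\gamma^2(t)/\lambda(t)\geq 2(1+\theta)/\delta$ is then exactly the form of (A1) required in Remark \ref{alv-att-nonexp}(i), and $\fix T=\zer(A+B)$ by \cite[Proposition 25.1(iv)]{bauschke-book}. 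Consequently Theorem \ref{conv-th} (in its averaged version, so with $\id-T$ treated as a $\delta/2$-cocoercive operator) yields directly the boundedness of $x$, the $L^2$-integrability of $\dot x$, $\ddot x$ and $(\id-T)(x)$, the vanishing of these three quantities at infinity, and the weak convergence of $x(t)$ to a point in $\zer(A+B)$.

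The main work is (iv). For fixed $x^*\in\zer(A+B)$, set $u(t)=T(x(t))$; the resolvent identity gives $\tfrac{x(t)-u(t)}{\eta}-B(x(t))\in A(u(t))$ while $-Bx^*\in A(x^*)$. Monotonicity of $A$ combined with $\beta$-cocoercivity of $B$ and a Young inequality with constant $\beta$ (the calculation is straightforward, and uses $\eta<2\beta$) yields the pointwise estimate
\begin{equation*}
\langle x(t)-x^*,\,x(t)-T(x(t))\rangle \;\geq\; \Big(1-\tfrac{\eta}{2\beta}\Big)\|x(t)-T(x(t))\|^2+\tfrac{\beta\eta}{2}\|B(x(t))-Bx^*\|^2.
\end{equation*}
Plugging this into the very same Lyapunov computation as in the proof of Theorem \ref{conv-th} (with $h(t)=\tfrac12\|x(t)-x^*\|^2$), the $\|B(x(t))-Bx^*\|^2$ term survives on the left-hand side of the analogue of \eqref{ineq-h-renorm-fin}, and integrating gives $B(x(\cdot))-Bx^*\in L^2$. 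For the pointwise limit, I would use the inclusion-plus-cocoercivity trick again to bound
\begin{equation*}
\beta\|B(x(t))-Bx^*\|^2 \;\leq\; \tfrac{1}{\eta}\langle u(t)-x^*,\,x(t)-u(t)\rangle-\langle u(t)-x(t),\,B(x(t))-Bx^*\rangle,
\end{equation*}
whose right-hand side tends to $0$ because $u(t)-x(t)=-(\id-T)(x(t))\to 0$ by (ii) and because $u(t)-x^*$ and $B(x(t))-Bx^*$ remain bounded (the trajectory is bounded, $B$ is Lipschitz). This gives $B(x(t))\to Bx^*$ and, applied to two arbitrary zeros, shows that $B$ is constant on $\zer(A+B)$. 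The main obstacle I anticipate here is keeping track of the constants so that the refined Lyapunov inequality really yields both the $L^2$ statement and the right sign structure; this is the reason I work with the Young-splitting constant equal to $\beta$.

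For (v), when one of $A,B$ is uniformly monotone the set $\zer(A+B)$ is a singleton $\{x^*\}$, and weak convergence is already known from (iii). If $B$ is uniformly monotone with modulus $\phi_B$, then $\phi_B(\|x(t)-x^*\|)\leq\langle x(t)-x^*,B(x(t))-Bx^*\rangle\leq\|x(t)-x^*\|\,\|B(x(t))-Bx^*\|$; boundedness of $x$ and (iv) send the right-hand side to $0$, and monotonicity of $\phi_B$ together with $\phi_B(0)=0$ force $\|x(t)-x^*\|\to 0$. If instead $A$ is uniformly monotone, I apply its modulus to the pair $(u(t),x^*)$ using the two inclusions above; the corresponding bound tends to $0$ since both $x(t)-u(t)\to 0$ and $B(x(t))-Bx^*\to 0$, hence $u(t)\to x^*$ strongly and therefore $x(t)\to x^*$ strongly as well.
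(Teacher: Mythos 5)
Your treatment of (i)--(iii) is exactly the paper's: the theorem is introduced with the remark that these parts follow from the averaged-operator version of Theorem \ref{conv-th} (Remark \ref{alv-att-nonexp}) once one knows that $T=J_{\eta A}\circ(\id-\eta B)$ is $1/\delta$-averaged (so $\id-T$ is $\delta/2$-cocoercive) and $\fix T=\zer(A+B)$, so there is nothing to compare there. For (iv) and (v) the paper gives no argument at all (it defers to \cite[Theorem 12]{b-c-sicon2016}), so this is where you are supplying real content. Your two resolvent-plus-cocoercivity inequalities are correct: monotonicity of $A$ applied to the pair $\bigl(\tfrac{x-u}{\eta}-Bx,\,-Bx^*\bigr)\in Au\times Ax^*$, combined with $\langle x-x^*,Bx-Bx^*\rangle\geq\beta\|Bx-Bx^*\|^2$ and, for the first display, Young's inequality with parameter $\beta$, gives precisely what you wrote. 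The arguments for the pointwise limit $B(x(t))\to Bx^*$, for the constancy of $B$ on $\zer(A+B)$, and for both cases of (v) (including the reduction $u(t)\to x^*$, $x(t)-u(t)\to0$ when $A$ is uniformly monotone) are sound.

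The one step that does not work as written is the $L^2$ claim in (iv). You propose to feed the refined inequality into ``the very same Lyapunov computation'' and read off $B(x(\cdot))-Bx^*\in L^2$ from an analogue of \eqref{ineq-h-renorm-fin}, asserting that the Young constant $\beta$ is what makes the sign structure come out right. It does not: with that choice the coefficient of $\|x(t)-T(x(t))\|^2$ drops from $\delta/2$ (the cocoercivity constant of $\id-T$, to which (A1) is calibrated) to $\delta-1$, and since $\delta<2$ one always has $\delta-1<\delta/2$; no choice of Young parameter produces a positive coefficient on $\|Bx-Bx^*\|^2$ without this loss. After substituting $\lambda(t)\bigl(x(t)-T(x(t))\bigr)=-\bigl(\ddot x(t)+\gamma(t)\dot x(t)\bigr)$, the coefficient of $\|\dot x(t)\|^2$ becomes $\tfrac{(\delta-1)\gamma^2(t)}{\lambda(t)}-1$ plus correction terms, which under (A1) can be negative (take $\eta$ close to $2\beta$, so that $\delta$ is close to $1$), and then the integrated inequality does not by itself bound $\int\lambda\|Bx-Bx^*\|^2$. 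The repair is easy and should be made explicit: do not try to make the refined inequality a self-contained Lyapunov inequality. Discard the nonnegative $(\delta-1)\|x-Tx\|^2$ term, keep only $\ddot h(t)+\gamma(t)\dot h(t)+\tfrac{\eta\beta}{2}\lambda(t)\|B(x(t))-Bx^*\|^2\leq\|\dot x(t)\|^2$, and integrate, using $\int_0^T\gamma\dot h\,dt=\gamma(T)h(T)-\gamma(0)h(0)-\int_0^T\dot\gamma h\,dt\geq-\gamma(0)h(0)$ (since $\dot\gamma\leq0$ and $h\geq0$) together with the facts already established in part (i): $\dot h$ bounded, $\dot x\in L^2([0,+\infty);{\cal H})$ and $\lambda\geq\ul\lambda>0$. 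This yields $\int_0^{+\infty}\|B(x(t))-Bx^*\|^2dt<+\infty$, and the rest of your proof goes through unchanged.
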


\begin{remark} For second order dynamical systems with vanishing damping we refer the reader to \cite{att-peyp}
where the following differential equation is investigated
\begin{equation}\label{dyn-syst-yos-att-peyp}\left\{
\begin{array}{ll}
\ddot x(t) + \frac{\alpha}{t} \dot x(t) + A_{\lambda(t)}(x(t))=0\\
x(0)=u_0, \dot x(0)=v_0,
\end{array}\right.\end{equation}
$A_{\lambda(t)}: {\cal H}\to  {\cal H}$ being the Yosida regularization of $A$, defined by
$A_{\lambda(t)}=\frac{1}{\lambda(t)}(\id-J_{\lambda(t)A})$. Convergence of the trajectory towards
a zero of the maximally monotone operator $A:{\cal H}\rightrightarrows {\cal H}$ is reported in
\cite{att-peyp} together with convergence rates concerning the velocity and the acceleration. 
Let us also mention the recent contribution \cite{att-lasz-cont-newton} where a Newton-like correction term is used: 
\begin{equation}\label{dyn-syst-yos-att-lasz}\left\{
\begin{array}{ll}
\ddot x(t) + \frac{\alpha}{t} \dot x(t) + \beta\frac{d}{dt}\big(A_{\lambda(t)}(x(t))\big) + A_{\lambda(t)}(x(t))=0\\
x(0)=u_0, \dot x(0)=v_0,
\end{array}\right.\end{equation}
with $\beta > 0$. In case of convex potentials, the correction term becomes the Hessian driven damping which 
is responsable for attenuation of the oscillations of the trajectories and for better convergence rates 
for the gradients along the orbits, see also \cite{att-ch-fad-r-arx2019} and \cite{shi-du-su-jordan2019}. For the algorithmic consequences of \eqref{dyn-syst-yos-att-peyp} and \eqref{dyn-syst-yos-att-lasz} we invite the reader to consult
\cite{att-peyp} and \cite{AL}. 
\end{remark}

In the remaining of this section we turn our attention to optimization problems of the form
\begin{equation*}
\min_{x \in {\cal H}} f(x) + g(x),
\end{equation*}
see \eqref{opt-fg} and the setting described there.

In the following statement we approach the minimizers of $f+g$ via the second order dynamical system
\begin{equation}\label{dyn-syst-fb-opt-sec-ord}\left\{
\begin{array}{ll}
\ddot x(t) + \gamma(t) \dot x(t) + \lambda(t)\left[x(t)-\prox_{\eta f}\Big(x(t)-\eta \nabla g(x(t))\Big)\right]=0\\
x(0)=u_0, \dot x(0)=v_0.
\end{array}\right.\end{equation}

\begin{corollary}\label{fb-dyn-opt} Let $f:{\cal H}\rightarrow\R\cup\{+\infty\}$ by a proper, convex and
lower semicontinuous function and $g:{\cal H}\rightarrow \R$ be a convex and (Fr\'{e}chet) differentiable function with  $\frac{1}{\beta}$-Lipschitz continuous gradient for $\beta >0$
such that $\argmin_{x\in {\cal H}}\{f(x)+g(x)\}\neq\emptyset$. Let $\eta\in(0,2\beta)$ and set $\delta:=\frac{4\beta-\eta}{2\beta}$. Let
$\lambda, \gamma :[0,+\infty)\rightarrow(0,+\infty)$ be functions fulfilling {\rm (A1)} as mentioned right after  \eqref{dyn-syst-fb-mon}, $u_0,v_0\in {\cal H}$ and
$x:[0,+\infty)\rightarrow {\cal H}$ be the unique strong global solution of \eqref{dyn-syst-fb-opt-sec-ord}.
Then the following statements are true:

(i) the trajectory $x$ is bounded and $\dot x,\ddot x,\big(\id -\prox_{\eta f}\circ(\id -\eta \nabla g)\big)x\in L^2([0,+\infty); {\cal H})$;

(ii) $\lim_{t\rightarrow+\infty}\dot x(t)=\lim_{t\rightarrow+\infty}\ddot x(t)=
\lim_{t\rightarrow+\infty}\big(\id -\prox_{\eta f}\circ(\id -\eta \nabla g)\big)(x(t))=0$;

(iii) $x(t)$ converges weakly to a minimizer of $f+g$  as $t\rightarrow+\infty$;

(iv) if $x^*$ is a minimizer of $f+g$, then $\nabla g(x(\cdot))-\nabla g (x^*)\in L^2([0,+\infty); {\cal H})$,
$\lim_{t\rightarrow+\infty}$ $\nabla g(x(t))=\nabla g(x^*)$ and $\nabla g$ is constant on $\argmin_{x\in {\cal H}}\{f(x)+g(x)\}$;

(v) if $f$ or $g$ is uniformly convex, then $x(t)$ converges strongly to the unique minimizer of $f+g$ as $t\rightarrow+\infty$.
\end{corollary}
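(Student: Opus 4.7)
The plan is to derive the corollary as a direct specialization of Theorem \ref{fb-dyn-sec-ord} with the choices $A=\partial f$ and $B=\nabla g$. First I would check the hypotheses of that theorem in this setting: $\partial f$ is maximally monotone by Rockafellar's theorem (already recalled in the preliminaries), while $\nabla g$ is $\beta$-cocoercive by the Baillon-Haddad theorem, quoted just after \eqref{opt-fg}. Moreover, from the preliminaries, $J_{\eta\partial f}=\prox_{\eta f}$, so the system \eqref{dyn-syst-fb-opt-sec-ord} coincides literally with \eqref{dyn-syst-fb-mon} under these identifications, and the constants $\beta$, $\delta=(4\beta-\eta)/(2\beta)$ as well as assumption {\rm (A1)} transfer unchanged.

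Next I would identify the zero set with the minimizers. Since $g$ is convex and everywhere differentiable (hence continuous), the subdifferential sum rule applies and Fermat's rule gives
\begin{equation*}
x^*\in\argmin_{x\in{\cal H}}\{f(x)+g(x)\}\ \Longleftrightarrow\ 0\in\partial(f+g)(x^*)=\partial f(x^*)+\nabla g(x^*),
\end{equation*}
so $\argmin(f+g)=\zer(\partial f+\nabla g)$, and the nonemptiness hypothesis on the set of minimizers transfers. With these identifications, the existence and uniqueness of the trajectory and statements (i)--(iv) follow directly from the corresponding statements in Theorem \ref{fb-dyn-sec-ord}, the constancy of $\nabla g$ on the minimizer set being the specialization of the constancy of $B$ on $\zer(A+B)$.

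For (v), the remaining point is to verify that uniform convexity of $f$ (respectively $g$) implies uniform monotonicity of $\partial f$ (respectively $\nabla g$) in the sense used in the preliminaries. For $f$ uniformly convex with modulus $\phi_f$, the standard characterization via subgradients yields $f(y)\geq f(x)+\langle u,y-x\rangle+\phi_f(\|y-x\|)$ for every $(x,u)\in\gr\partial f$, and adding this to the symmetric inequality gives $\langle x-y,u-v\rangle\geq 2\phi_f(\|x-y\|)$ for every $(x,u),(y,v)\in\gr\partial f$, so $\partial f$ is uniformly monotone with modulus $2\phi_f$; the same argument applies to $\nabla g$. Invoking Theorem \ref{fb-dyn-sec-ord}(v) then yields strong convergence of $x(t)$ to the unique minimizer.

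There is no real obstacle here, only bookkeeping: the content of the corollary is entirely encoded in Theorem \ref{fb-dyn-sec-ord} together with the three classical facts invoked above (Rockafellar's maximal monotonicity theorem, the Baillon-Haddad theorem, and the uniform convexity $\Longrightarrow$ uniform monotonicity correspondence). No new Lyapunov analysis is needed, and the constants in {\rm (A1)} are those of Theorem \ref{fb-dyn-sec-ord} with the same $\beta$ and $\delta$.
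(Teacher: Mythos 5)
Your proposal is correct and coincides with the paper's intended derivation: the corollary is obtained by specializing Theorem \ref{fb-dyn-sec-ord} to $A=\partial f$ and $B=\nabla g$, using Rockafellar's maximal monotonicity theorem, the Baillon--Haddad theorem, the identity $J_{\eta\partial f}=\prox_{\eta f}$, the identification $\argmin(f+g)=\zer(\partial f+\nabla g)$, and the fact that uniform convexity of $f$ (resp.\ $g$) yields uniform monotonicity of $\partial f$ (resp.\ $\nabla g$). The paper states the corollary without further proof precisely because all of this is the bookkeeping you carried out.
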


\begin{remark}\label{alv-att-opt} Consider again the setting in Remark \ref{alv-att-nonexp}(ii), namely, when
$\gamma(t)=\gamma > 0$ for every $t\geq 0$ and $\lambda(t)=1$ for every $t \in [0,+\infty)$.
Furthermore, for $C$ a nonempty, convex, closed subset of ${\cal H}$, let $f=\delta_C$ be the indicator function of $C$, which is defined as being equal to $0$ for $x\in C$ and to $+\infty$, else.
The dynamical system \eqref{dyn-syst-fb-opt} attached in this setting to the minimization of $g$ over $C$ becomes
\begin{equation}\label{dyn-syst-fb-opt-alv-att}\left\{
\begin{array}{ll}
\ddot x(t) + \gamma \dot x(t) + x(t)-\proj_C\big(x(t)-\eta \nabla g(x(t))\big)=0\\
x(0)=u_0, \dot x(0)=v_0.
\end{array}\right.\end{equation}
The asymptotic convergence of the trajectories of \eqref{dyn-syst-fb-opt-alv-att} has been studied in \cite[Theorem 3.1]{att-alv} under the conditions $\gamma^2>2$ and $0<\eta\leq2\beta$.
In this case the corresponding assumption (A1) trivially holds by choosing $\theta$ such that $0<\theta\leq(\gamma^2-2)/2\leq(\delta\gamma^2-2)/2$.
Thus, in order to verify the corresponding (A1) in case  $\lambda(t)=1$ for every $t \in [0,+\infty)$ one needs
to equivalently assume that $\gamma^2>2/\delta$. Since $\delta \geq 1$, this provides a slight improvement over \cite[Theorem 3.1]{att-alv} in what concerns the choice of $\gamma$.
We refer the reader also to \cite{antipin} for an analysis of the convergence rates of trajectories of the dynamical system \eqref{dyn-syst-fb-opt-alv-att} when $g$ is endowed with supplementary properties.
\end{remark}

\begin{remark}\label{opt2} By using the energy functional \eqref{en-funct}, it is possible to go beyond the
condition $0<\eta\leq 2\beta$ considered in Corollary \ref{fb-dyn-opt}. We refer the reader to
\cite[Corollary 16]{b-c-sicon2016} for more details. Moreover, in \cite[Remark 17]{b-c-sicon2016} it has been pointed out
that in case $\gamma(t)=\gamma >0$ for every $t\geq 0$ and $\lambda(t)=1$ for every $t \in [0,+\infty)$,
the only condition imposed on the parameters is $\gamma^2>\frac{\eta}{\beta}+1$.
In other words, this allows in this particular setting a more relaxed choice
for the parameters, beyond the standard assumptions $0<\eta\leq 2\beta$ and $\gamma^2>2$
considered in \cite{att-alv}.
\end{remark}

\begin{remark}\label{discr} The explicit discretization of \eqref{dyn-syst-fb-opt-sec-ord} with respect to the time variable $t$, with step size $h_n > 0$, relaxation variable $\lambda_n >0$,
damping variable $\gamma_n > 0$ and initial points $x_0:= u_0$ and $x_1:= v_0$ yields the following iterative scheme
$$\frac{x_{n+1} - 2x_n + x_{n-1}}{h_n^2} + \gamma_n \frac{x_{n+1}-x_n}{h_n} = \lambda_n\left[\prox\nolimits_{\eta f}\Big(x_n-\eta \nabla g(x_n)\Big) - x_n\right] \ \forall n \geq 1.$$
For $h_n=1$ this becomes
\begin{equation}\label{eqdiscr}
x_{n+1} = \left(1- \frac{\lambda_n}{1+\gamma_n} \right)x_n + \frac{\lambda_n}{1+\gamma_n} \prox\nolimits_{\eta f}\Big(x_n-\eta \nabla g(x_n)\Big) + \frac{\lambda_n}{1+\gamma_n}(x_{n} - x_{n-1})
\ \forall n \geq 1,
\end{equation}
which is a relaxed forward-backward algorithm for minimizing $f+g$ with inertial effects. In order to investigate the convergence properties of the above iterative scheme, it is natural to assume that $(\gamma_n)_{n\geq 1}$ is nonincreasing, $(\lambda_n)_{n\geq 1}$ is nondecreasing, and there exists a lower bound for $(\gamma_n^2/{\lambda_n})_{n\geq 1}$. The control of the inertial term by means of the variable
parameters $\lambda_n$ and $\gamma_n$ could increase the speed of convergence of the algorithm \eqref{eqdiscr}. Making use of the
the sequence $(\lambda_n)_{n\geq 1}$ in \eqref{eqdiscr}, one obtains a relaxed forward-backward scheme, where the relaxation is usually considered in the literature in order to achieve more freedom in the choice of the parameters involved in the numerical scheme. We refer the reader to \cite{att-cab} where this fact has
been underlined in the context of monotone inclusion problems.
\end{remark}

\begin{remark} In case $f$ is vanishing and $\eta \lambda(t)=1$, the dynamical system \eqref{dyn-syst-fb-opt-sec-ord} becomes
 \begin{equation}\label{dyn-syst-opt-sec-ord-su-boyd-candes}\left\{
\begin{array}{ll}
\ddot x(t) + \gamma(t) \dot x(t) + \nabla g(x(t))=0\\
x(0)=u_0, \dot x(0)=v_0.
\end{array}\right.\end{equation}
There is an intensive research in the literature mainly devoted to the vanishing damping of the form
$\gamma(t)=\frac{\alpha}{t}$, with $\alpha >0$. This is due to the fact that in case $\alpha >3$ this provides
a fast convergence result in terms of the objective function of order $o\left(\frac{1}{t^2}\right)$.
The starting point of the investigations was the work of Su, Boyd and Cand\`{e}s \cite{su-boyd-candes}, followed by relevant contributions of Attouch and his co-workers (see for example
\cite{att-ch-r-esaim2019, att-c-p-r-math-pr2018} and the references therein). This is related to the accelerated gradient method in the sense of Nesterov
 $$\left\{
\begin{array}{ll}
y_n &= \  \ x_n+\frac{n-1}{n+\alpha-1}(x_n-x_{n-1})\\
x_{n+1}&=  \ \ y_n-\gamma \nabla g(y_n)
\end{array}\right.$$
where inertial terms by means of iterates accelerate the convergence behaviour of the numerical scheme.
Let us mention also the recent contributions \cite{att-ch-fad-r-arx2019, shi-du-su-jordan2019} where
inertial terms induced by the gradient of $g$ are also considered, which are coming from the discretization of
Hessian driven damping terms, see also \cite{att-p-r-jde2016} and \cite{bcl-tikhonov}.
\end{remark}

\section{First order dynamics for nonconvex optimization problems}

In this section we approach the solving of the optimization problem
\begin{equation}\label{intr-opt-pb}  \ \inf_{x\in\R^n}[f(x)+g(x)], \end{equation}
where $f:\R^n\to \R\cup\{+\infty\}$ is a proper, convex, lower semicontinuous function and
$g:\R^n\to \R$ a (possibly nonconvex) Fr\'{e}chet differentiable function with $\beta$-Lipschitz continuous gradient for
$\beta \geq 0$, i.e., $\|\nabla g(x) - \nabla g(y) \| \leq \beta\|x-y\| \ \forall x,y \in \R^n$, by associating to it the
implicit dynamical system
\begin{equation}\label{intr-dyn-syst}\left\{
\begin{array}{ll}
\dot x(t) +  x(t)=\prox_{\eta f}\Big(x(t)-\eta\nabla g(x(t))\Big)\\
x(0)=x_0,
\end{array}\right.\end{equation}
where $\eta>0$ and $x_0\in\R^n$ is chosen arbitrary (we consider the euclidean space $\R^n$).

\begin{lemma}\label{l-decr} Suppose that $f+g$ is bounded from below and $\eta>0$ fulfills the inequality
\begin{equation}\label{eta-beta}\eta\beta(3+\eta\beta)<1.\end{equation} For $x_0\in\R^n$, let  $x \in C^1([0,+\infty), \R^n)$ be the unique global solution of
\eqref{intr-dyn-syst}. Then the following statements hold:
\begin{enumerate}
                                            \item [(a)] $\dot x\in L^2([0,+\infty);\R^n)$ and $\lim_{t\rightarrow+\infty}\dot x(t)=0$;
                                            \item [(b)] $\exists\lim_{t\rightarrow+\infty}(f+g)\big(\dot x(t)+x(t)\big)\in\R$.
                                           \end{enumerate}
\end{lemma}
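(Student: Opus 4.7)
The plan is to mimic the proof of Theorem \ref{cont-ista}, with the same energy functional adapted to the current parametrization (here $g$ has $\beta$-Lipschitz gradient rather than $1/\beta$-Lipschitz gradient, and the step size is $\eta$ in place of $\gamma$). Concretely, I would consider
$$E(t)=(f+g)\big(\dot x(t)+x(t)\big)+\frac{1}{2\eta}\|\dot x(t)\|^2$$
and aim to establish the descent inequality
$$\frac{d}{dt}E(t)+\left[\frac{1}{\eta}-\beta(3+\eta\beta)\right]\|\dot x(t)\|^2\leq 0 \qquad \text{a.e. } t\geq 0,$$
noting that the assumption $\eta\beta(3+\eta\beta)<1$ makes the bracketed coefficient strictly positive.

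To derive this inequality, I would first use the characterization of the proximal operator applied to \eqref{intr-dyn-syst} to obtain $-\frac{1}{\eta}\dot x(t)-\nabla g(x(t))\in\partial f\big(\dot x(t)+x(t)\big)$. Since the map $M:=\prox_{\eta f}\circ(\id-\eta\nabla g)-\id$ is Lipschitz (with constant at most $2+\eta\beta$), the relation $\dot x(t)=M(x(t))$ shows that $\dot x$ is locally absolutely continuous, $\ddot x$ exists a.e., and $\|\ddot x(t)\|\leq(2+\eta\beta)\|\dot x(t)\|$. Then the Brezis chain rule \cite[Lemme 4, p. 73]{brezis} (invoked as in the proof of Theorem \ref{cont-ista}) yields
$$\frac{d}{dt}f\big(\dot x(t)+x(t)\big)=\left\langle-\tfrac{1}{\eta}\dot x(t)-\nabla g(x(t)),\,\ddot x(t)+\dot x(t)\right\rangle,$$
and the classical chain rule does the same for $g$. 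Summing, using the $\beta$-Lipschitz continuity of $\nabla g$ to bound $\|\nabla g(\dot x+x)-\nabla g(x)\|\leq \beta\|\dot x\|$, and estimating $\|\ddot x+\dot x\|\leq(3+\eta\beta)\|\dot x\|$ produces the announced descent inequality after collecting the $-\frac{1}{2\eta}\frac{d}{dt}\|\dot x\|^2$ term.

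Once the descent inequality is in hand, (a) follows quickly. Integrating from $0$ to $T$ and using the lower bound on $f+g$ gives
$$\left[\tfrac{1}{\eta}-\beta(3+\eta\beta)\right]\int_0^T\|\dot x(t)\|^2\,dt\leq E(0)-\inf(f+g)<+\infty,$$
so $\dot x\in L^2([0,+\infty);\R^n)$. To upgrade this to $\dot x(t)\to 0$, I would note that
$$\frac{d}{dt}\left(\tfrac{1}{2}\|\dot x(t)\|^2\right)=\langle\dot x(t),\ddot x(t)\rangle\leq (2+\eta\beta)\|\dot x(t)\|^2\in L^1([0,+\infty)),$$
so Lemma \ref{fejer-cont2} applied to $F(t)=\tfrac{1}{2}\|\dot x(t)\|^2$ gives $\lim_{t\to+\infty}\dot x(t)=0$.

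For (b), the descent inequality shows that $E$ is nonincreasing; it is also bounded below since $f+g\geq\inf(f+g)$ and $\|\dot x\|^2\geq 0$. Hence $\lim_{t\to+\infty}E(t)\in\R$ exists. Combined with $\|\dot x(t)\|^2\to 0$ from part (a), this yields the existence of $\lim_{t\to+\infty}(f+g)\big(\dot x(t)+x(t)\big)$ in $\R$. The main technical obstacle in this argument is the justification of the chain rule for the non-smooth $f$ along the trajectory, which forces the absolute continuity analysis of $\dot x$ and the invocation of the Brezis lemma; everything else is a bookkeeping exercise once the descent inequality is secured.
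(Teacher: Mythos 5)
Your proposal is correct and follows essentially the same route as the paper: the same energy functional $E(t)=(f+g)\big(\dot x(t)+x(t)\big)+\frac{1}{2\eta}\|\dot x(t)\|^2$, the same descent inequality derived via the proximal characterization, the Lipschitz bound $\|\ddot x(t)\|\leq(2+\eta\beta)\|\dot x(t)\|$ and the Br\'ezis chain rule, then integration for $\dot x\in L^2$, Lemma \ref{fejer-cont2} for $\dot x(t)\to 0$, and monotonicity plus boundedness below of $E$ for part (b). The only cosmetic differences are that the paper bounds $\langle\dot x,\ddot x\rangle$ by $\frac12\|\dot x\|^2+\frac12\|\ddot x\|^2$ before applying Lemma \ref{fejer-cont2} and invokes Lemma \ref{fejer-cont1} explicitly for the limit in (b), whereas you use the pointwise bound on $\ddot x$ and a direct monotone-bounded argument; both are equivalent.
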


\begin{proof} Let us start by noticing that $\dot x$ is locally absolutely
continuous, hence $\ddot x$ exists and for almost every $t \in [0,+\infty)$ one has
\begin{equation}\label{ddot-exist-f-nonc}\|\ddot x(t)\|\leq(2+\eta\beta)\|\dot x(t)\|.\end{equation}

We fix an arbitrary $T>0$. Due to the continuity properties of the trajectory on $[0,T]$, \eqref{ddot-exist} and the Lipschitz continuity of $\nabla g$, one has
$$x, \dot x, \ddot x, \nabla g(x) \in L^2([0,T];\R^n).$$
Further, from the characterization of the proximal point operator we have
\begin{equation}\label{from-def-prox-f-nonc}-\frac{1}{\eta}\dot x(t)-\nabla g(x(t))\in\partial f(\dot x(t)+x(t)) \ \forall t \in [0,+\infty).\end{equation}
Like in the proof of Theorem \ref{cont-ista} we obtain
$$\frac{d}{dt}f\big(\dot x(t)+x(t)\big)=\left\langle -\frac{1}{\eta}\dot x(t)-\nabla g(x(t)),\ddot x(t)+\dot x(t)\right\rangle,$$
$$\frac{d}{dt}g\big(\dot x(t)+x(t)\big)=\left\langle \nabla g\big(\dot x(t)+x(t)\big),\ddot x(t)+\dot x(t)\right\rangle$$
and for almost every $t\in [0,T]$ we have
\begin{equation}\label{decr-f} \frac{d}{dt}\left[(f+g)\big(\dot x(t)+x(t)\big)+\frac{1}{2\eta}\|\dot x(t)\|^2\right]+
\left[\frac{1}{\eta}-\beta(3+\eta\beta)\right]\|\dot x(t)\|^2\leq 0.
\end{equation}
By integration we get
\begin{align}\label{integ}
& (f+g)\big(\dot x(T)+x(T)\big)+\frac{1}{2\eta}\|\dot x(T)\|^2  + \left[\frac{1}{\eta}-\beta(3+\eta\beta)\right] \int_{0}^T \|\dot x(t)\|^2dt \leq \nonumber \\
& (f+g)\big(\dot x(0)+x_0\big)+\frac{1}{2\eta}\|\dot x(0)\|^2.
\end{align}
By using  \eqref{eta-beta} and the fact that $f+g$ is bounded from below and by taking into account that $T > 0$ has been arbitrarily chosen, we obtain
\begin{equation}\label{dot-l2}\dot x\in L^2([0,+\infty);\R^n).      \end{equation}
Due to \eqref{ddot-exist-f-nonc}, this further implies
\begin{equation}\label{ddot-l2}
\ddot x\in L^2([0,+\infty);\R^n).
\end{equation}
Furthermore, for almost every $t \in [0,+\infty)$ we have
$$\frac{d}{dt}\big(\|\dot x(t)\|^2\big)=2\langle \dot x(t),\ddot x(t)\rangle\leq \|\dot x(t)\|^2+\|\ddot x(t)\|^2.$$
By applying Lemma \ref{fejer-cont2}, it follows that $\lim_{t\rightarrow+\infty}\dot x(t)=0$ and the proof of (a) is complete.
From \eqref{decr-f}, \eqref{eta-beta} and by using that $T >0$ has been arbitrarily chosen, we get
$$\frac{d}{dt}\left[(f+g)\big(\dot x(t)+x(t)\big)+\frac{1}{2\eta}\|\dot x(t)\|^2\right] \leq 0$$
for almost every $t \in [0,+\infty)$. From Lemma \ref{fejer-cont1} it follows that
$$\lim_{t\rightarrow+\infty}\left[(f+g)\big(\dot x(t)+x(t)\big)+\frac{1}{2\eta}\|\dot x(t)\|^2\right]$$
exists and it is a real number, hence from $\lim_{t\rightarrow+\infty}\dot x(t)=0$ the conclusion follows.
\end{proof}

For the following generalized subdifferential notions and their basic properties we refer to \cite{boris-carte, rock-wets}.
Let $h:\R^n\rightarrow \R\cup\{+\infty\}$ be a proper and lower semicontinuous function. If $x\in\dom h$, we consider the {\it Fr\'{e}chet (viscosity)
subdifferential} of $h$ at $x$ as the set $$\hat{\partial}h(x)= \left \{v\in\R^n: \liminf_{y\rightarrow x}\frac{h(y)-h(x)-\<v,y-x\>}{\|y-x\|}\geq 0 \right \}.$$ For
$x\notin\dom h$ we set $\hat{\partial}h(x):=\emptyset$. The {\it limiting (Mordukhovich) subdifferential} is defined at $x\in \dom h$ by
$$\partial_L h(x)=\{v\in\R^n:\exists x_k\rightarrow x,h(x_k)\rightarrow h(x)\mbox{ and }\exists v_k\in\hat{\partial}h(x_k),v_k\rightarrow v \mbox{ as }k\rightarrow+\infty\},$$
while for $x \notin \dom h$, one takes $\partial_L h(x) :=\emptyset$. Therefore  $\hat\partial h(x)\subseteq\partial_L h(x)$ for each $x\in\R^n$.

Notice that in case $h$ is convex, these subdifferential notions coincide with the {\it convex subdifferential}, thus
$\hat\partial h(x)=\partial_L h(x)=\{v\in\R^n:h(y)\geq h(x)+\<v,y-x\> \ \forall y\in \R^n\}$ for all $x\in\R^n$.

The graph of the limiting subdifferential fulfills the following closedness criterion: if $(x_k)_{k\in\N}$ and $(v_k)_{k\in\N}$ are sequences in $\R^n$ such that
$v_k\in\partial_L h(x_k)$ for all $k\in\N$, $(x_k,v_k)\rightarrow (x,v)$ and $h(x_k)\rightarrow h(x)$ as $k\rightarrow+\infty$, then $v\in\partial_L h(x)$.

The Fermat rule reads in this nonsmooth setting as follows: if $x\in\R^n$ is a local minimizer of $h$, then $0\in\partial_L h(x)$. We denote by
$$\crit(h)=\{x\in\R^n: 0\in\partial_L h(x)\}$$ the set of {\it (limiting)-critical points} of $h$.

We define the limit set of the trajectory $x$ as
$$\omega (x)=\{\ol x\in\R^n:\exists t_k\rightarrow+\infty \mbox{ such that }x(t_k)\rightarrow\ol x \mbox{ as }k\rightarrow+\infty\}.$$

\begin{lemma}\label{l-lim-crit-f} Suppose that $f+g$ is bounded from below and $\eta>0$ fulfills the inequality \eqref{eta-beta}.
For $x_0\in\R^n$, let  $x \in C^1([0,+\infty), \R^n)$ be the unique global solution of
\eqref{intr-dyn-syst}. Then $$\omega(x)\subseteq \crit (f+g).$$
\end{lemma}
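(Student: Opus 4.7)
The plan is to combine the optimality characterization \eqref{from-def-prox-f-nonc} of the proximal operator with the closedness of the graph of the limiting subdifferential. Since $f$ is convex and lower semicontinuous and $g$ is continuously differentiable, the sum rule gives $\partial_L(f+g) = \partial f + \nabla g$, so \eqref{from-def-prox-f-nonc} can be rewritten as
\begin{equation*}
v(t) := -\frac{1}{\eta}\dot x(t) - \nabla g(x(t)) + \nabla g\bigl(\dot x(t)+x(t)\bigr) \in \partial_L(f+g)\bigl(\dot x(t)+x(t)\bigr)
\end{equation*}
for every $t \geq 0$. Fix $\bar x \in \omega(x)$ and pick $t_k \to +\infty$ with $x(t_k) \to \bar x$.

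Next I would check the three ingredients needed to invoke the closedness of $\gr \partial_L(f+g)$. First, $\dot x(t_k)+x(t_k) \to \bar x$, which follows from $\lim_{t\to\infty}\dot x(t)=0$ given by Lemma \ref{l-decr}(a). Second, $v(t_k)\to 0$: by the triangle inequality and $\beta$-Lipschitz continuity of $\nabla g$,
\begin{equation*}
\|v(t_k)\| \leq \frac{1}{\eta}\|\dot x(t_k)\| + \beta\|\dot x(t_k)\| \longrightarrow 0.
\end{equation*}

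The delicate third ingredient is $(f+g)\bigl(\dot x(t_k)+x(t_k)\bigr)\to (f+g)(\bar x)$. Continuity of $g$ takes care of its component, so the issue reduces to proving $f\bigl(\dot x(t_k)+x(t_k)\bigr)\to f(\bar x)$. Lower semicontinuity of $f$ gives the $\liminf$ bound; for the $\limsup$ bound I would exploit that $\dot x(t)+x(t)$ is by definition the unique minimizer of $y\mapsto f(y)+\frac{1}{2\eta}\|y-x(t)+\eta\nabla g(x(t))\|^2$, so testing this minimum against $\bar x$ yields
\begin{equation*}
f\bigl(\dot x(t_k)+x(t_k)\bigr) \leq f(\bar x) + \frac{1}{2\eta}\Bigl[\|\bar x - x(t_k)+\eta\nabla g(x(t_k))\|^2 - \|\dot x(t_k)+\eta\nabla g(x(t_k))\|^2\Bigr].
\end{equation*}
Since $x(t_k)\to \bar x$, $\dot x(t_k)\to 0$, and $\nabla g(x(t_k))\to \nabla g(\bar x)$, both bracketed norms converge to $\|\eta\nabla g(\bar x)\|^2$, so their difference vanishes and $\limsup_k f(\dot x(t_k)+x(t_k))\leq f(\bar x)$.

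With these three items in hand, the closedness criterion for $\gr \partial_L(f+g)$ recalled right before the lemma applies to the sequence $(\dot x(t_k)+x(t_k), v(t_k))$ and delivers $0 \in \partial_L(f+g)(\bar x)$, i.e., $\bar x \in \crit(f+g)$. The main obstacle is the $\limsup$ estimate on $f$ along the sequence, which is not automatic from lower semicontinuity and must be extracted from the variational characterization of the proximal operator as above.
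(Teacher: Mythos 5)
Your proposal is correct and follows essentially the same route as the paper: rewrite the prox optimality condition as an element of $\partial_L(f+g)$ at $\dot x(t_k)+x(t_k)$, show that this element tends to $0$ via Lemma \ref{l-decr}(a) and the Lipschitz continuity of $\nabla g$, establish $(f+g)(\dot x(t_k)+x(t_k))\to(f+g)(\bar x)$ by combining lower semicontinuity with the $\limsup$ bound obtained from testing the proximal minimization against $\bar x$, and conclude by the closedness of the graph of the limiting subdifferential. The only cosmetic difference is that the paper expands the square in the proximal objective before passing to the limit, whereas you keep it in completed-square form; both computations are identical in substance.
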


\begin{proof} Let $\ol x\in\omega (x)$ and $t_k\rightarrow+\infty \mbox{ be such that }x(t_k)\rightarrow\ol x
\mbox{ as }k\rightarrow+\infty.$ From the characterization of the prox operator we have
\begin{align}-\frac{1}{\eta}\dot x(t_k)-\nabla g(x(t_k))+\nabla g\big(\dot x(t_k)+x(t_k)\big)\in & \ \partial f\big(\dot x(t_k)+x(t_k)\big)+
\nabla g\big(\dot x(t_k)+x(t_k)\big)\nonumber\\\label{incl-tk} = & \ \partial_L (f+g)\big(\dot x(t_k)+x(t_k)\big) \ \forall k \in \N.
\end{align}

Lemma \ref{l-decr}(a) and the Lipschitz continuity of $\nabla g$ ensure that
\begin{equation}\label{bor1} -\frac{1}{\eta}\dot x(t_k)-\nabla g(x(t_k))+\nabla g\big(\dot x(t_k)+x(t_k)\big)\rightarrow 0 \mbox{ as }k\rightarrow+\infty
\end{equation}
and
\begin{equation}\label{bor2} \dot x(t_k)+x(t_k)\rightarrow \ol x \mbox{ as }k\rightarrow+\infty.
\end{equation}

We claim that \begin{equation}\label{bor3} \lim_{k\rightarrow+\infty}(f+g)\big(\dot x(t_k)+x(t_k)\big)=(f+g)(\ol x).\end{equation}
Due to the lower semicontinuity of $f$ it holds
\begin{equation}\label{from-f-lsc}\liminf_{k\rightarrow+\infty}f\big(\dot x(t_k)+x(t_k)\big)\geq f(\ol x).\end{equation}

Further, since \begin{align*} \dot x(t_k)+x(t_k)= & \argmin_{u\in\R^n}\left[f(u)+\frac{1}{2\eta}\left\|u-\big(x(t_k)-\eta\nabla g(x(t_k))\big)\right\|^2\right]\\
                                      = & \argmin_{u\in\R^n}\left[f(u)+\frac{1}{2\eta}\|u-x(t_k)\|^2+\langle u-x(t_k),\nabla g(x(t_k))\rangle\right] \end{align*}
we have the inequality
\begin{align*}
 & \ f\big(\dot x(t_k)+x(t_k)\big)+\frac{1}{2\eta}\|\dot x(t_k)\|^2+\langle \dot x(t_k),\nabla g(x(t_k))\rangle\\
\leq & \ f(\ol x)+\frac{1}{2\eta}\|\ol x-x(t_k)\|^2+\langle \ol x-x(t_k), \nabla g(x(t_k))\rangle \ \forall k \in \N.
\end{align*}

Taking the limit as $k\rightarrow+\infty$ we derive by using again Lemma \ref{l-decr}(a) that
\begin{equation*}\limsup_{k\rightarrow+\infty}f\big(\dot x(t_k)+x(t_k)\big)\leq f(\ol x),\end{equation*}
which combined with \eqref{from-f-lsc} implies
\begin{equation*}\lim_{k\rightarrow+\infty}f\big(\dot x(t_k)+x(t_k)\big)= f(\ol x).\end{equation*}
By using \eqref{bor2}  and the continuity of $g$ we conclude that \eqref{bor3} is true.

Altogether, from \eqref{incl-tk}, \eqref{bor1}, \eqref{bor2}, \eqref{bor3} and the closedness criteria of the limiting subdifferential we
obtain $0\in\partial_L (f+g)(\ol x)$ and the proof is complete.
\end{proof}

\begin{lemma}\label{l-h123} Suppose that $f+g$ is bounded from below and $\eta>0$ fulfills the inequality \eqref{eta-beta}.
For $x_0\in\R^n$, let  $x \in C^1([0,+\infty), \R^n)$ be the unique global solution of
\eqref{intr-dyn-syst} and consider the function
$$H:\R^n\times\R^n\to\R\cup\{+\infty\},\, H(u,v)=(f+g)(u)+\frac{1}{2\eta}\|u-v\|^2.$$
Then the following statements are true:
\begin{itemize}
\item[($H_1$)] for almost every $t\in [0,+\infty)$ it holds
$$\frac{d}{dt}H\big(\dot x(t)+x(t),x(t)\big)\leq -\left[\frac{1}{\eta}-(3+\eta\beta)\beta\right]\|\dot x(t)\|^2\leq 0$$  and
$$\exists\lim_{t\rightarrow +\infty}H\big(\dot x(t)+x(t),x(t)\big)\in\R;$$
\item[($H_2$)] for every $t\in [0,+\infty)$ it holds  $$z(t):=\left(-\nabla g(x(t))+\nabla g\big(\dot x(t)+x(t)\big),-\frac{1}{\eta}\dot x(t)\right)\in\partial_L H\big(\dot x(t)+x(t),x(t)\big)$$
and $$\|z(t)\|\leq \left(\beta+\frac{1}{\eta}\right)\|\dot x(t)\|;$$
\item[($H_3$)] for $\ol x\in\omega (x)$ and $t_k\rightarrow+\infty$ such that $x(t_k)\rightarrow\ol x$ as $k\rightarrow+\infty$, we have
$H\big(\dot x(t_k)+x(t_k),x(t_k)\big)\rightarrow H(\ol x,\ol x)$ as $k\rightarrow+\infty$.
\end{itemize}
\end{lemma}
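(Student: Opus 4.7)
The plan is to read off each of the three properties from what has already been established, rather than redoing any analysis from scratch. The crucial observation is that along the trajectory
\[ H\bigl(\dot x(t)+x(t),x(t)\bigr)=(f+g)\bigl(\dot x(t)+x(t)\bigr)+\frac{1}{2\eta}\|\dot x(t)\|^2, \]
which is precisely the Lyapunov-type quantity that was analyzed in Lemma \ref{l-decr} via inequality \eqref{decr-f}. So for ($H_1$) I would simply invoke \eqref{decr-f} to get the claimed pointwise decrease, and combine Lemma \ref{l-decr}(a) (i.e.\ $\dot x(t)\to 0$) with Lemma \ref{l-decr}(b) (i.e.\ existence of $\lim_{t\to+\infty}(f+g)(\dot x(t)+x(t))$) to deduce that the limit of $H(\dot x(t)+x(t),x(t))$ exists in $\R$. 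No new computation is needed.

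For ($H_2$), the main step is a subdifferential identification in the product space. Since $H$ decomposes as a sum of $(f+g)(u)$ and the smooth function $(u,v)\mapsto \frac{1}{2\eta}\|u-v\|^2$, the sum rule and the product structure give
\[ \partial_L H(u,v)=\Bigl(\partial_L(f+g)(u)+\tfrac{1}{\eta}(u-v),\,-\tfrac{1}{\eta}(u-v)\Bigr). \]
Substituting $u=\dot x(t)+x(t)$, $v=x(t)$, so that $u-v=\dot x(t)$, the second component is immediately $-\tfrac{1}{\eta}\dot x(t)$. For the first component I would use the proximal characterization \eqref{from-def-prox-f-nonc}, namely $-\tfrac{1}{\eta}\dot x(t)-\nabla g(x(t))\in\partial f(\dot x(t)+x(t))$; adding $\nabla g(\dot x(t)+x(t))$ to both sides and invoking $\partial_L(f+g)=\partial f+\nabla g$ yields that the first component of $z(t)$ is indeed an element of $\partial_L(f+g)(\dot x(t)+x(t))+\tfrac{1}{\eta}\dot x(t)$, as required. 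The norm estimate then follows from the $\beta$-Lipschitz continuity of $\nabla g$ together with the elementary inequality $\sqrt{a^2+b^2}\leq a+b$ for $a,b\geq 0$.

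Finally, for ($H_3$), along a sequence $t_k\to+\infty$ with $x(t_k)\to \ol x$, Lemma \ref{l-decr}(a) gives $\dot x(t_k)\to 0$, so $\dot x(t_k)+x(t_k)\to\ol x$ and $\tfrac{1}{2\eta}\|\dot x(t_k)\|^2\to 0$. The remaining content is the limit
\[ \lim_{k\to+\infty}(f+g)\bigl(\dot x(t_k)+x(t_k)\bigr)=(f+g)(\ol x), \]
but this was proved already inside Lemma \ref{l-lim-crit-f} (equations \eqref{from-f-lsc}--\eqref{bor3}), using lower semicontinuity of $f$ together with the minimizing characterization of $\dot x(t_k)+x(t_k)$ as $\prox_{\eta f}(x(t_k)-\eta\nabla g(x(t_k)))$. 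Adding these two limits gives $H(\dot x(t_k)+x(t_k),x(t_k))\to(f+g)(\ol x)=H(\ol x,\ol x)$. I do not expect any genuine obstacle here; the only point that requires minimal care is the sum rule in ($H_2$), where one must be explicit that the smoothness of the quadratic term permits writing $\partial_L(H)=\partial_L((f+g)\otimes \mathrm{const})+\nabla(\tfrac{1}{2\eta}\|\cdot-\cdot\|^2)$ without any qualification condition.
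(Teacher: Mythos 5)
Your proposal is correct and follows essentially the same route as the paper: ($H_1$) is read off from \eqref{decr-f} together with Lemma \ref{l-decr}, ($H_2$) from the product-space subdifferential formula for $H$ combined with the proximal characterization \eqref{from-def-prox-f-nonc} and the Lipschitz continuity of $\nabla g$, and ($H_3$) from the intermediate limit \eqref{bor3} established inside the proof of Lemma \ref{l-lim-crit-f}. No gaps.
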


\begin{proof} (H1) follows from Lemma \ref{l-decr}. The first statement in (H2) follows from the characterization of the proximal operator and the relation
\begin{equation}\label{H-subdiff}\partial_L H(u,v)=\big(\partial_L (f+g)(u)+\eta^{-1}(u-v)\big)\times \{\eta^{-1}(v-u)\} \ \forall (u,v)\in\R^n\times\R^n,\end{equation}
while the second one is a consequence of the Lipschitz continuity of $\nabla g$. Finally, (H3) has been shown as intermediate step in the proof of Lemma \ref{l-lim-crit-f}.
\end{proof}

\begin{lemma}\label{l} Assume that the hypotheses of Lemma \ref{l-h123} hold.
Suppose that  $x$ is bounded. Then the following statements are true:
\begin{itemize}
\item[(a)] $\omega(\dot x+x,x)\subseteq \crit(H)=\{(u,u)\in\R^n\times\R^n:u\in \crit(f+g)\}$;
\item[(b)] $\lim_{t\to+\infty}\dist\Big(\big(\dot x(t)+x(t),x(t)\big),\omega\big(\dot x + x,x\big)\Big)=0$;
\item[(c)] $\omega\big(\dot x+x,x\big)$ is nonempty, compact and connected;
\item[(d)] $H$ is finite and constant on $\omega\big(\dot x+x,x\big).$
\end{itemize}
\end{lemma}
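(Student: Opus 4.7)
The plan is to deduce the four items directly from properties (H1)--(H3) of Lemma \ref{l-h123}, combined with the boundedness hypothesis on $x$ and the limiting relation $\lim_{t\to+\infty}\dot x(t)=0$ from Lemma \ref{l-decr}(a). These are exactly the three ingredients of the standard ``descent lemma + subgradient bound + continuity'' machinery used in the KL framework, so the work reduces to routine limit-set manipulations.

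First I would handle (a). Boundedness of $x$ together with $\dot x(t)\to 0$ gives boundedness of the enlarged trajectory $t\mapsto(\dot x(t)+x(t),x(t))$. Pick $(u,v)\in\omega(\dot x+x,x)$ with $t_k\to+\infty$ such that $(\dot x(t_k)+x(t_k),x(t_k))\to(u,v)$; since $\dot x(t_k)\to 0$, the two coordinates have the same limit, so $u=v$. From (H2) I get $z(t_k)\in\partial_L H(\dot x(t_k)+x(t_k),x(t_k))$ with $\|z(t_k)\|\le(\beta+\eta^{-1})\|\dot x(t_k)\|\to 0$. Combined with the value convergence $H(\dot x(t_k)+x(t_k),x(t_k))\to H(u,u)$ furnished by (H3) and the closedness of the graph of $\partial_L H$, I conclude $0\in\partial_L H(u,u)$. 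The identification $\crit H=\{(u,u):u\in\crit(f+g)\}$ is then read off from the product structure \eqref{H-subdiff}.

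For (b) I would argue by contradiction: if the distance did not vanish, there exist $\varepsilon>0$ and $t_k\to+\infty$ with $\dist((\dot x(t_k)+x(t_k),x(t_k)),\omega(\dot x+x,x))\ge\varepsilon$; boundedness lets me extract a convergent subsequence, whose limit lies in $\omega(\dot x+x,x)$ by definition, contradicting the lower bound. For (c), nonemptiness and compactness are immediate from boundedness and the usual closedness of $\omega$-limit sets; connectedness follows from the identity $\omega(\dot x+x,x)=\bigcap_{T\ge 0}\overline{\{(\dot x(t)+x(t),x(t)):t\ge T\}}$, expressing the limit set as a decreasing intersection of compact connected sets (each $\{t\ge T\}$-image is the continuous image of a connected set, hence connected, and its closure remains connected), which is itself connected in $\R^n\times\R^n$. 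Finally, (d) is a direct consequence of (H1), which yields $\ell:=\lim_{t\to+\infty}H(\dot x(t)+x(t),x(t))\in\R$, together with (H3): for every $(u,u)\in\omega(\dot x+x,x)$ one has $H(u,u)=\lim_k H(\dot x(t_k)+x(t_k),x(t_k))=\ell$.

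The only nonroutine point is verifying that the coordinates collapse to the diagonal in (a); everything else is a mechanical transcription of the three standing hypotheses (H1)--(H3). The connectedness argument in (c) is classical and relies essentially on the finite-dimensional setting and the continuity of $t\mapsto(\dot x(t)+x(t),x(t))$, which is guaranteed because $x\in C^1$ and thus $\dot x+x$ is continuous.
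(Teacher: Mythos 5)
Your proof is correct and follows essentially the same route as the paper, which simply declares (a), (b), (d) to be direct consequences of Lemmas \ref{l-decr}, \ref{l-lim-crit-f} and \ref{l-h123} and cites Haraux for the classical nonempty/compact/connected property in (c); you have merely written out the details (diagonal collapse via $\dot x(t)\to 0$, closedness of $\partial_L H$ with (H2)--(H3), the nested-compact-connected-intersection argument) that the paper leaves implicit.
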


\begin{proof} (a), (b) and (d) are direct consequences Lemma \ref{l-decr}, Lemma \ref{l-lim-crit-f} and Lemma \ref{l-h123}.

Finally, (c) is a classical result from \cite{haraux}. We also refer the reader  to the proof of Theorem 4.1 in \cite{alv-att-bolte-red}, where it is shown that the properties of $\omega(x)$ of being nonempty, compact and connected are generic for bounded trajectories fulfilling  $\lim_{t\rightarrow+\infty}{\dot x(t)}=0$.
\end{proof}

\begin{remark}\label{cond-x-bound}
Suppose that $\eta>0$ fulfills the inequality \eqref{eta-beta} and $f+g$ is coercive, that is $$\lim_{\|u\|\rightarrow+\infty}(f+g)(u)=+\infty.$$
For $x_0\in\R^n$, let  $x \in C^1([0,+\infty), \R^n)$ be the unique global solution of
\eqref{intr-dyn-syst}. Then $f+g$ is bounded from below and
$x$ is bounded.

Indeed, since $f+g$ is a proper, lower semicontinuous and coercive function, it follows that
$\inf_{u\in\R^n}[f(u)+g(u)]$ is finite and the infimum is attained. Hence $f+g$ is bounded from below. On the other hand, from \eqref{integ} it follows
\begin{align*}(f+g)\big(\dot x(T)+x(T)\big)\leq & \ (f+g)\big(\dot x(T)+x(T)\big)+\frac{1}{2\eta}\|\dot x(T)\|^2\\
  \leq & \ (f+g)\big(\dot x(0)+x_0)\big)+\frac{1}{2\eta}\|\dot x(0)\|^2 \ \forall T \geq 0.
\end{align*}
Since the lower level sets of $f+g$ are bounded, the above inequality yields the boundedness of $\dot x+x$, which
combined with $\lim_{t\rightarrow+\infty}\dot x(t)=0$ delivers the boundedness of $x$.
\end{remark}

The class of functions
satisfying the {\it Kurdyka-\L{}ojasiewicz property} plays
a crucial role in the asymptotic analysis of the dynamical system  \eqref{intr-dyn-syst}. For $\eta\in(0,+\infty]$, we denote by $\Theta_{\eta}$ the class of concave and continuous functions
$\varphi:[0,\eta)\rightarrow [0,+\infty)$ such that $\varphi(0)=0$, $\varphi$ is continuously differentiable on $(0,\eta)$, continuous at $0$ and $\varphi'(s)>0$ for all
$s\in(0, \eta)$. In the following definition (see \cite{att-b-red-soub2010, b-sab-teb}) we use also the {\it distance function} to a set, defined for $A\subseteq\R^n$ as $\dist(x,A)=\inf_{y\in A}\|x-y\|$
for all $x\in\R^n$.

\begin{definition}\label{KL-property} \rm({\it Kurdyka-\L{}ojasiewicz property}) Let $h:\R^n\rightarrow\R\cup\{+\infty\}$ be a proper and lower semicontinuous
function. We say that $h$ satisfies the {\it Kurdyka-\L{}ojasiewicz (KL) property} at $\ol x\in \dom\partial_L h=\{x\in\R^n:\partial_L h(x)\neq\emptyset\}$, if there exist $\eta \in(0,+\infty]$, a neighborhood $U$ of $\ol x$ and a function $\varphi\in \Theta_{\eta}$ such that for all $x$ in the
intersection
$$U\cap \{x\in\R^n: h(\ol x)<h(x)<h(\ol x)+\eta\}$$ the following inequality holds
$$\varphi'(h(x)-h(\ol x))\dist(0,\partial_L h(x))\geq 1.$$
If $h$ satisfies the KL property at each point in $\dom\partial h$, then $h$ is called {\it KL function}.
\end{definition}

The origins of this notion go back to the pioneering work of \L{}ojasiewicz \cite{lojasiewicz1963}, where it is proved that for a real-analytic function
$h:\R^n\rightarrow\R$ and a critical point $\ol x\in\R^n$ (that is $\nabla h(\ol x)=0$), there exists $\theta\in[1/2,1)$ such that the function
$|h-h(\ol x)|^{\theta}\|\nabla h\|^{-1}$ is bounded around $\ol x$. This corresponds to the situation when $\varphi(s)=Cs^{1-\theta}$, where
$C>0$. The result of
\L{}ojasiewicz allows the interpretation of the KL property as a re-parametrization of the function values in order to avoid flatness around the
critical points. Kurdyka \cite{kurdyka1998} extended this property to differentiable functions definable in o-minimal structures.
Further extensions to the non-smooth setting can be found in \cite{b-d-l2006, att-b-red-soub2010, b-d-l-s2007, b-d-l-m2010}.

One of the remarkable properties of the KL functions is their ubiquity in applications (see \cite{b-sab-teb}). To the class of KL functions belong semi-algebraic, real sub-analytic, semiconvex, uniformly convex and
convex functions satisfying a growth condition. We refer the reader to
\cite{b-d-l2006, att-b-red-soub2010, b-d-l-m2010, b-sab-teb, b-d-l-s2007, att-b-sv2013, attouch-bolte2009} and the references therein for more on KL functions and illustrating examples.

We come now to the main result of this section.

\begin{theorem}\label{conv-kl} Suppose that $f+g$ is bounded from below and $\eta>0$ fulfills the inequality \eqref{eta-beta}. For $x_0\in\R^n$, let  $x \in C^1([0,+\infty), \R^n)$ be the unique global solution of \eqref{intr-dyn-syst} and consider the function
$$H:\R^n\times\R^n\to\R\cup\{+\infty\},\, H(u,v)=(f+g)(u)+\frac{1}{2\eta}\|u-v\|^2.$$
Suppose that  $x$ is bounded and $H$ is a KL function. Then the following statements are true:
\begin{itemize}\item[(a)] $\dot x\in L^1([0,+\infty);\R^n)$;
\item[(b)] there exists $\ol x\in\crit(f+g)$ such that $\lim_{t\rightarrow+\infty}x(t)=\ol x$.
\end{itemize}
\end{theorem}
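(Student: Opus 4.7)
The plan is to follow the now-standard KL-based convergence scheme (in the spirit of \cite{alv-att-bolte-red, att-b-sv2013, b-sab-teb}), using the three ingredients already collected in Lemma \ref{l-h123} and Lemma \ref{l}. Write $c:=\frac{1}{\eta}-(3+\eta\beta)\beta>0$ (positive by \eqref{eta-beta}), $C:=\beta+\frac{1}{\eta}$, and $u(t):=(\dot x(t)+x(t),x(t))$. From Lemma \ref{l}, the limit set $\Omega:=\omega(\dot x+x,x)$ is nonempty, compact, contained in $\crit(H)$, and $H\equiv \bar H$ on $\Omega$ for some $\bar H\in\R$. By (H1), $t\mapsto H(u(t))$ is nonincreasing and $\lim_{t\to+\infty}H(u(t))=\bar H$.

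First I would invoke the uniformized KL property on the compact set $\Omega$ (standard argument as in \cite[Lemma 6]{b-sab-teb}): there exist $\varepsilon>0$, $\delta>0$ and $\varphi\in\Theta_\eta$ such that for every $u$ with $\dist(u,\Omega)<\varepsilon$ and $\bar H<H(u)<\bar H+\delta$, one has
\begin{equation*}
\varphi'\bigl(H(u)-\bar H\bigr)\,\dist\bigl(0,\partial_L H(u)\bigr)\ \geq\ 1.
\end{equation*}
Using Lemma \ref{l}(b) together with $H(u(t))\downarrow\bar H$, pick $t_0\geq 0$ such that for all $t\geq t_0$ we have $\dist(u(t),\Omega)<\varepsilon$ and $\bar H\leq H(u(t))<\bar H+\delta$. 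If on some interval $H(u(t))=\bar H$, then by (H1) $\dot x\equiv 0$ there and there is nothing to prove; so I may restrict attention to the (open) set where $H(u(t))>\bar H$, which by (H1) forces $\dot x(t)\neq 0$ on that set.

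The key step is then to differentiate the Lyapunov quantity $t\mapsto \varphi(H(u(t))-\bar H)$. Combining (H1), (H2) and the KL inequality above, one obtains for a.e.\ $t\geq t_0$ where $\dot x(t)\neq 0$:
\begin{align*}
-\frac{d}{dt}\varphi\bigl(H(u(t))-\bar H\bigr)
&= -\varphi'\bigl(H(u(t))-\bar H\bigr)\,\frac{d}{dt}H(u(t))\\
&\geq \ \frac{c\,\|\dot x(t)\|^2}{\dist(0,\partial_L H(u(t)))}\ \geq\ \frac{c\,\|\dot x(t)\|^2}{\|z(t)\|}\ \geq\ \frac{c}{C}\,\|\dot x(t)\|.
\end{align*}
Integrating from $t_0$ to any $T\geq t_0$ and using $\varphi\geq 0$ yields
\begin{equation*}
\frac{c}{C}\int_{t_0}^{T}\|\dot x(t)\|\,dt \ \leq\ \varphi\bigl(H(u(t_0))-\bar H\bigr).
\end{equation*}
Letting $T\to+\infty$ gives (a). Since $\R^n$ is complete and $\|x(t)-x(s)\|\leq\int_s^t\|\dot x(\tau)\|\,d\tau$, the trajectory $x(\cdot)$ is Cauchy at infinity, so $\lim_{t\to+\infty}x(t)$ exists; call it $\bar x$. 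By Lemma \ref{l-lim-crit-f}, $\bar x\in\omega(x)\subseteq\crit(f+g)$, proving (b).

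The main obstacle is the familiar one: justifying the derivative computation above on the full half-line while avoiding division by zero where $\dot x(t)=0$, and handling the measurability/regularity of the composition $\varphi(H(u(\cdot))-\bar H)$. This is handled by restricting to the open set $\{t\geq t_0:H(u(t))>\bar H\}$, observing that on its complement $\dot x$ already vanishes a.e.\ thanks to (H1), so the integrated inequality extends by monotonicity to the whole interval $[t_0,+\infty)$. The rest is bookkeeping built on top of Lemma \ref{l-h123} and the uniformized KL lemma.
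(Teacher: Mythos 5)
Your proposal is correct and follows essentially the same route as the paper: the uniformized KL inequality on the compact set $\Omega=\omega(\dot x+x,x)$ (justified by Lemma \ref{l}(c),(d)), the dichotomy between the case where $H$ along the trajectory reaches its limit value (forcing $\dot x=0$ eventually) and the strict case, and then the differentiation of $t\mapsto\varphi\bigl(H(\dot x(t)+x(t),x(t))-H(\ol x,\ol x)\bigr)$ combined with (H1), (H2) to obtain the bound $-\frac{d}{dt}\varphi(\cdot)\geq \frac{c}{C}\|\dot x(t)\|$, whose integration gives $\dot x\in L^1$ and hence convergence of $x(t)$ to a critical point. The only cosmetic difference is that you identify the limit via Lemma \ref{l-lim-crit-f} while the paper takes the limit to be the preselected $\ol x\in\crit(f+g)$; both are fine.
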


\begin{proof} According to Lemma \ref{l}, we can choose an element $\ol x\in\crit (f+g)$ such that
$(\ol x,\ol x)\in \omega (\dot x+x,x)$. According to Lemma \ref{l-h123}, it follows that
$$\lim_{t\rightarrow+\infty}H\big(\dot x(t)+x(t),x(t)\big)=H(\ol x,\ol x).$$

We treat the following two cases separately.

I. There exists $\ol t\geq 0$ such that $$H\big(\dot x(\ol t)+x(\ol t),x(\ol t)\big)=H(\ol x,\ol x).$$ Since from
Lemma \ref{l-h123}(H1) we have $$\frac{d}{dt}H\big(\dot x(t)+x(t),x(t)\big) \leq 0 \ \forall t \in [0,+\infty),$$  we obtain  for every $t\geq \ol t$ that
$$H\big(\dot x(t)+x(t),x(t)\big)\leq H\big(\dot x(\ol t)+x(\ol t),x(\ol t)\big)=H(\ol x,\ol x).$$ Thus $H\big(\dot x(t)+x(t),x(t)\big)=H(\ol x,\ol x)$ for every $t\geq \ol t$. This yields by Lemma \ref{l-h123}(H1) that
$\dot x(t)=0$ for almost every $t \in [\ol t, +\infty)$, hence $x$ is constant on $[\ol t,+\infty)$ and the conclusion follows.

II. For every $t\geq 0$ it holds $H\big(\dot x(t)+x(t),x(t)\big)>H(\ol x,\ol x).$ Take $\Omega=\omega(\dot x+x,x)$.

In virtue of Lemma \ref{l}(c) and (d) and since $H$ is a KL function, there exist positive numbers $\epsilon$ and $\eta$ and
a concave function $\varphi\in\Theta_{\eta}$ such that for all
\begin{align}\label{int-H}
(x,y)\in & \{(u,v)\in\R^n\times\R^n: \dist((u,v),\Omega)<\epsilon\} \nonumber \\
 & \cap\{(u,v)\in\R^n\times\R^n:H(\ol x,\ol x)<H(u,v)<H(\ol x,\ol x)+\eta\}\end{align}
one has
\begin{equation}\label{ineq-H}\varphi'(H(x,y)-H(\ol x,\ol x))\dist((0,0),\partial_L H(x,y))\ge 1.\end{equation}

Let $t_1\geq 0$ be such that $H\big(\dot x(t)+x(t),x(t)\big)<H(\ol x,\ol x)+\eta$ for all $t\geq t_1$. Since
$\lim_{t\to+\infty}\dist\Big(\big(\dot x(t)+x(t),x(t)\big),\Omega\Big)=0$, there exists $t_2\geq 0$ such that
$\dist\Big(\big(\dot x(t)+x(t),x(t)\big),\Omega\Big)<\epsilon$ for all $t\geq t_2$. Hence for all $t\geq T:=\max\{t_1,t_2\}$,
$\big(\dot x(t)+x(t),x(t)\big)$ belongs to the intersection in \eqref{int-H}. Thus, according to \eqref{ineq-H}, for every $t\geq T$ we have
\begin{equation}\label{ineq-Ht1}\varphi'\Big(H\big(\dot x(t)+x(t),x(t)\big)-H(\ol x,\ol x)\Big)
\dist\Big((0,0),\partial_L H\big(\dot x(t)+x(t),x(t)\big)\Big)\ge 1.\end{equation}
By applying Lemma \ref{l-h123}(H2) we obtain for almost every $t \in [T, +\infty)$
\begin{equation}\label{ineq-Ht2}(\beta+\eta^{-1})\|\dot x(t)\|\varphi'\Big(H\big(\dot x(t)+x(t),x(t)\big)-H(\ol x,\ol x)\Big)
\ge 1.\end{equation}
From here, by using Lemma \ref{l-h123}(H1) and that $\varphi'>0$ and
\begin{align*}
& \frac{d}{dt}\varphi\Big(H\big(\dot x(t)+x(t),x(t)\big)-H(\ol x,\ol x)\Big)=\\
& \varphi'\Big(H\big(\dot x(t)+x(t),x(t)\big)-H(\ol x,\ol x)\Big)\frac{d}{dt}H\big(\dot x(t)+x(t),x(t)\big),
\end{align*}
we deduce that for almost every $t \in [T, +\infty)$ it holds
\begin{equation}\label{ineq-pt-conv-r} \frac{d}{dt}\varphi\Big(H\big(\dot x(t)+x(t),x(t)\big)-H(\ol x,\ol x)\Big)\leq
-\left(\beta+\eta^{-1}\right)^{-1}\left[\frac{1}{\eta}-(3+\eta\beta)\beta\right]\|\dot x(t)\|.\end{equation}
Since $\varphi$ is bounded from below, by taking into account \eqref{eta-beta}, it follows  $\dot x\in L^1([0,+\infty);\R^n)$. From here we obtain that $\lim_{t\rightarrow+\infty}x(t)$ exists and this closes the proof.
\end{proof}

Since the class of semi-algebraic functions is closed under addition (see for example \cite{b-sab-teb}) and
$(u,v) \mapsto c\|u-v\|^2$ is semi-algebraic for $c>0$, we can state the above result in case $f+g$ is semi-algebraic.

\begin{remark} The construction of the function $H$, which we used in the above arguments in order to derive a descent property, has been inspired by the
decrease property obtained in \eqref{decr-f}. Similar regularizations of the objective function of \eqref{intr-opt-pb}
have been considered also in \cite{bcl, ipiano}, in the context of the investigation of non-relaxed forward-backward methods involving inertial and
memory effects in the nonconvex setting.
\end{remark}

\begin{remark} (i) We invite the reader to consult \cite[Section 3.2]{bc-forder-kl} for results concerning convergence rates of the trajectory, which are expressed by means of the \L{}ojasiewicz exponent.

(ii) The optimization problem \eqref{intr-opt-pb} can be approached in this non-convex setting also by  a second order dynamical system of implicit-type
\begin{equation}\label{dysy}
\left\{
\begin{array}{ll}
\ddot{x}(t)+\gamma\dot{x}(t)+x(t)=\prox_{\eta f}\big(x(t)-\eta \nabla g(x(t))\big)\\
x(0)=u_0,\,\dot{x}(0)=v_0,
\end{array}
\right.
\end{equation}
where $u_0,v_0\in \R^n$ and $\gamma,\eta\in (0,+\infty)$. A similar analysis can be carried out for this dynamics too, see \cite{bcl-dyn-sec-ord}.

(iii) Finally, let us mention a recent contribution \cite{bk} related to a dynamical system proposed and
investigated in connection with block-structured optimization problems
\begin{equation}\label{opt-pb-structured-nonc}  \ \inf_{(x,y)\in\R^n\times \R^m}[f(x)+g(y)+H(x,y)], \end{equation}
where $f,g$ are proper, lower-semicontinuous and $H$ satisfies certain smoothness conditions. The dynamics
considered in \cite{bk} can be seen as a continuous counter-part of the PALM method introduced in
\cite{b-sab-teb}.
\end{remark}

{\bf Acknowledgments.} The author is thankful to the handling editor and two reviewers for their comments which improved the presentation of the manuscript.

\end{document}